\newtheorem{theorem}{Theorem}[section]
\newtheorem{lemma}[theorem]{Lemma}
\theoremstyle{definition}
\theoremstyle{remark}
\numberwithin{equation}{section}
\begin{document}

\setcounter{page}{1}

%%%%%%%%%%%%%%%%%%%%%%%%%%%%

\begin{center}
{\Large \textbf{Approximation properties of the Stancu type Dunkl generalization of the Kantorovich-Sz\'{a}sz-Mirakjan-operators via $q$-calculus}}

\bigskip

\textbf{M. Mursaleen}, \textbf{Taqseer Khan} and \textbf{Nasiruzzaman}

Department of\ Mathematics, Aligarh Muslim University, Aligarh--202002, India%
\\[0pt]

mursaleenm@gmail.com; nasir3489@gmail.com \\[0pt]

\bigskip

\textbf{Abstract}
\end{center}

\parindent=8mm {\footnotesize {In this paper we construct Stancu type $q$-Kantrovich-Sz\'{a}sz-Mirakjan operators generated by Dunkl generalization of the
exponential function. We obtain some approximation results
using the Korovkin approximation theorem and the weighted Korovkin-type theorem
for these operators. We also study convergence properties by using the
modulus of continuity and the rate of convergence of these operators for
functions belonging to the Lipschitz class. Furthermore, we obtain the rate
of convergence in terms of the classical, second order, and weighted modulus
of continuity.}}\newline

{\footnotesize \emph{Keywords and phrases}: $q$-integers; Dunkl analogue;
generating functions; generalization of exponential function; Sz\'{a}sz
operator; modulus of continuity;} {\footnotesize {weighted modulus of
continuity.}}

{\footnotesize \emph{AMS Subject Classification (2010):} Primary 41A25;
41A36; Secondary 33C45.}

%\title[The Dunkl generalization of $q$-parametric Sz\'{a}sz Mirakjan operators defined by $q$-integers
%]{The Dunkl generalization of $q$-parametric Sz\'{a}sz Mirakjan operators defined by $q$-integers}

%\author[M. Mursaleen and Md. Nasiruzzaman]{M. Mursaleen and Md. Nasiruzzaman}

%\address{$^{1}$ Department of Mathematics, Aligarh Muslim University, Aligarh--202002, India.}
%\email{\textcolor[rgb]{0.00,0.00,0.84}{mursaleenm@gmail.com;
%nasir3489@gmail.com}}

%\address{$^{2}$ Department of Pure Mathematics, Aligarh Muslim University, Aligarh--202002, India.}
%\email{\textcolor[rgb]{0.00,0.00,0.84}{asifjnu07@gmail.com; ansari.jkhursheed@gmail.com}}

%\dedicatory{This paper is dedicated to Professor ABCD}

%\subjclass[2010]{Primary 41A25; 41A36; Secondary 33C45.}

%\keywords{$(q)$-integers; Dunkl analogue; generating functions; generalization of
%exponential function; Sz\'{a}sz operator; modulus of continuity.}

%\date{Received: xxxxxx; Revised: yyyyyy; Accepted: zzzzzz.
%\newline \indent $^{*}$ Corresponding author}

%\begin{abstract}
%%In this pape, we construct a linear positive operators
%$q$-parametric Sz\'{a}sz Mirakjan operators generated by the
%$q$-Dunkl generalization of the exponential function. We obtain
%Korovkin's type approximation theorem for these operators and compute convergence of
%these operators by using the modulus of continuity. Furthermore, the rate of
%convergence of the operators for functions belonging to the Lipschitz class is presented.
%\end{abstract} \maketitle

\section{Introduction and preliminaries}

In 1912, S.N Bernstein \cite{sbbl1} introduced the following sequence of
operators $B_{n}:C[0,1]\rightarrow C[0,1]$ defined by
\begin{equation}
B_{n}(f;x)=\sum_{k=0}^{n}\binom{n}{k}x^{k}(1-x)^{n-k}f\left( \frac{k}{n}%
\right) ,~~~~~~~x\in \lbrack 0,1].  \label{s1}
\end{equation}%
for $n\in \mathbb{N}$ and $f\in C[0,1]$.

In 1950, for $x \geq 0$, Sz\'{a}sz \cite{sbbl4} introduced the operators
\begin{equation}  \label{s2}
S_n(f;x)=e^{-nx}\sum_{k=0}^\infty \frac{(nx)^k}{k!} f\left(\frac{k}{n}%
\right),~~~~~~~f \in C[0,\infty).
\end{equation}

In the field of approximation theory, the application of $q$-calculus
emerged as a new area in the field of approximation theory. The first $q$%
-analogue of the well-known Bernstein polynomials was introduced by Lupa\c{s}
by applying the idea of $q$-integers \cite{sbbl2}. In 1997 Phillips \cite%
{sbbl3} considered another $q$-analogue of the classical Bernstein
polynomials. Later on, many authors introduced $q$-generalizations of
various operators and investigated several approximation properties \cite%
{skant, smah1,sradu,smah2,smur3}.\newline

%Recently, Mursaleen et al. applied $(p,q)$-calculus in approximation theory,
%which is a generalization of $q$-calculus. They introduced first $(p, q)$%
%-analogue of Bernstein operators \cite{sn1}. The $(p, q)$-calculus$, 0 < q <
%p \leq 1$ in which for $p = 1$, $(p, q)$-integers reduce to $q$-integers.
%They have also introduced the $(p,q)$-analogue of Bernstein-Stancu operators
%in \cite{sn3}, $(p,q)$-analogue of Bleimann, Butzer and Hahn operators \cite%
%{sn4,sn5}, Higher order generalization of $(p,q)$-Bernstein operators \cite%
%{sn6}, $(p,q)$-Bernstein-Shurer operators\cite{sn2}, Kantorovich variant of $%
%(p,q)$-Sz\'{a}sz-Mirakjan operators \cite{smn3} and Dunkl generalization of $%
%q$-parametric Sz\'{a}sz-Mirakjan operators \cite{smn2} {and investigated
%their approx}imation properties.\newline

The $q$-integer $[n]_{q}$, the $q$-factorial $[n]_{q}!$ and the $q$-binomial
coefficient are defined by (see \cite{sbbl5})
\begin{align*}
\lbrack n]_{q}& :=\left\{
\begin{array}{ll}
\frac{1-q^{n}}{1-q}, & \hbox{if~}q\in \mathbb{R}^{+}\setminus \{1\} \\
n, & \hbox{if~}q=1,%
\end{array}%
\right. \mbox{for $n\in \mathbb{N} $~and~$[0]_q=0$}, \\
\lbrack n]_{q}!& :=\left\{
\begin{array}{ll}
\lbrack n]_{q}[n-1]_{q}\cdots \lbrack 1]_{q}, & \hbox{$n\geq 1$,} \\
1, & \hbox{$n=0$,}%
\end{array}%
\right. \\
\left[
\begin{array}{c}
n \\
k%
\end{array}%
\right] _{q}& :=\frac{[n]_{q}!}{[k]_{q}![n-k]_{q}!},
\end{align*}%
respectively.

\noindent The $q$-analogue of $(1+x)^{n}$ is the polynomial
\begin{equation*}
(1+x)_{q}^{n}:=\left\{
\begin{array}{ll}
(1+x)(1+qx)\cdots (1+q^{n-1}x) & \quad n=1,2,3,\cdots \\
1 & \quad n=0.%
\end{array}%
\right.
\end{equation*}%
A $q$-analogue of the common Pochhammer symbol also called a $q$-shifted
factorial is defined as
\begin{equation*}
(x;q)_{0}=1,~(x;q)_{n}=\prod\limits_{j=0}^{n-1}(1-q^{j}x),~(x;q)_{\infty
}=\prod\limits_{j=0}^{\infty }(1-q^{j}x).
\end{equation*}

\noindent The Gauss binomial formula is given by
\begin{equation*}
(x+a)_{q}^{n}=\sum\limits_{k=0}^{n}\left[
\begin{array}{c}
n \\
k%
\end{array}%
\right] _{q}q^{k(k-1)/2}a^{k}x^{n-k}.
\end{equation*}

The $q-$analogue of Bernstein operators \cite{sbbl3} is defined as

\begin{equation}
B_{n,q}(f;x)=\sum\limits_{k=0}^{n}\left[
\begin{array}{c}
n \\
k%
\end{array}%
\right] _{q}x^{k}\prod\limits_{s=0}^{n-k-1}(1-q^{s}x)~~f\left( \frac{[k]_{q}
}{[m]_{q}}\right) ,~~x\in [0,1], n \in \mathbb{N}.
\end{equation}

There are two $q$-analogue of the exponential function $e^{z}$, defined as
follows :\newline

For $\mid z \mid < \frac{1}{1-q}$ and $\mid q \mid <1$,
\begin{equation}  \label{zfr1}
e(z)= \sum_{k=0}^\infty \frac{z^k}{k!}= \frac{1}{1-\left((1-q)z\right)_q^%
\infty},
\end{equation}
and for $\mid q \mid<1$,
\begin{equation}  \label{zfr2}
E(z)= \prod_{j=0}^\infty \left(1+(1-q)q^jz\right)_q^\infty=\sum_{k=0}^\infty
q^{\frac{k(k-1)}{2}}\frac{z^k}{k!} = \left(1+(1-q)z\right)_q^\infty,
\end{equation}
where $(1-x)_q^\infty=\prod_{j=0}^\infty(1-q^jx)$.

%For $x \geq 0,~~~f \in C[0,\infty),\mu \geq 0,~~~n \in \mathbb{N}$
Sucu \cite{sbbl9} defined a Dunkl analogue of Sz\'{a}sz operators via a
generalization of the exponential function \cite{sbbl10} as follows:

\begin{equation}
S_{n}^*(f;x):= \frac{1}{e_\mu(nx)}\sum_{k=0}^\infty \frac{(nx)^k}{%
\gamma_\mu(k)} f \left(\frac{k+2\mu\theta_k}{n}\right),
\end{equation}
where $x \geq 0,~~~f \in C[0,\infty),\mu \geq 0,~~~n \in \mathbb{N}$\newline
and
\begin{equation*}
e_\mu(x)= \sum_{n=0}^\infty \frac{x^n}{\gamma_\mu(n)}.
\end{equation*}
Here
\begin{equation*}
\gamma_\mu(2k)= \frac{2^{2k}k!\Gamma\left(k+\mu+\frac{1}{2}\right)}{%
\Gamma\left(\mu+\frac{1}{2}\right)},
\end{equation*}
and
\begin{equation*}
\gamma_\mu(2k+1)= \frac{2^{2k+1}k!\Gamma\left(k+\mu+\frac{3}{2}\right)}{%
\Gamma\left(\mu+\frac{1}{2}\right)}.
\end{equation*}

There is given a recursion for $\gamma _{\mu }$
\begin{equation*}
\gamma _{\mu }(k+1)=(k+1+2\mu \theta _{k+1})\gamma _{\mu
}(k),~~~~k=0,1,2,\cdots ,
\end{equation*}%
where
\begin{equation*}
\theta _{k}=%
\begin{cases}
0 & \quad \text{if }k\in 2\mathbb{N} \\
1 & \quad \text{if }k\in 2\mathbb{N}+1.%
\end{cases}%
\end{equation*}

Cheikh et al. \cite{sbbl11} stated the $q$-Dunkl classical $q$-Hermite type
polynomials and gave definitions of $q$-Dunkl analogues of exponential
functions and recursion relations for $\mu >-\frac{1}{2}$ and $0<q<1$.
\begin{equation}
e_{\mu ,q}(x)=\sum_{n=0}^{\infty }\frac{x^{n}}{\gamma _{\mu ,q}(n)},~~~x\in
\lbrack 0,\infty )  \label{sr1}
\end{equation}%
\begin{equation}
E_{\mu ,q}(x)=\sum_{n=0}^{\infty }\frac{q^{\frac{n(n-1)}{2}}x^{n}}{\gamma
_{\mu ,q}(n)},~~~x\in \lbrack 0,\infty )  \label{sr2}
\end{equation}%
\begin{equation}
\gamma _{\mu ,q}(n+1)=\left( \frac{1-q^{2\mu \theta _{n+1}+n+1}}{1-q}\right)
\gamma _{\mu ,q}(n),~~~~~n\in \mathbb{N},  \label{sr3}
\end{equation}

\begin{equation*}
\theta _{n}=%
\begin{cases}
0 & \quad \text{if }n\in 2\mathbb{N}, \\
1 & \quad \text{if }n\in 2\mathbb{N}+1.%
\end{cases}%
\end{equation*}%
An explicit formula for $\gamma _{\mu ,q}(n)$ is
\begin{equation*}
\gamma _{\mu ,q}(n)=\frac{(q^{2\mu +1},q^{2})_{[\frac{n+1}{2}%
]}(q^{2},q^{2})_{[\frac{n}{2}]}}{(1-q)^{n}}\gamma _{\mu ,q}(n),~~~~~n\in
\mathbb{N}.
\end{equation*}

And some of the special cases of $\gamma_{\mu,q}(n)$ defined as:

\begin{equation*}
\gamma_{\mu,q}(0)=1, ~~~~~~\gamma_{\mu,q}(1)=\frac{1-q^{2\mu+1}}{1-q}%
,~~~~\gamma_{\mu,q}(2)=\left(\frac{1-q^{2\mu+1}}{1-q}\right)\left(\frac{1-q^2%
}{1-q}\right),
\end{equation*}

\begin{equation*}
\gamma_{\mu,q}(3)= \left(\frac{1-q^{2\mu+1}}{1-q}\right) \left(\frac{1-q^2}{%
1-q}\right) \left(\frac{1-q^{2\mu+3}}{1-q}\right),
\end{equation*}

\begin{equation*}
\gamma_{\mu,q}(4)= \left(\frac{1-q^{2\mu+1}}{1-q}\right) \left(\frac{1-q^2}{%
1-q}\right) \left(\frac{1-q^{2\mu+3}}{1-q}\right) \left(\frac{1-q^4}{1-q}%
\right).
\end{equation*}

In \cite{ckz}, G\"{u}rhan I\c{c}\"{o}z gave a Dunkl generalization of
Kantrovich type integral generalization of Sz\'{a}sz operators. In \cite%
{sbbl13}, they gave a Dunkl generalization of Sz\'{a}sz operators via $q$%
-calculus as:
\begin{equation}
D_{n,q}(f;x)=\frac{1}{e_{\mu ,q}([n]_{q}x)}\sum_{k=0}^{\infty }\frac{%
([n]_{q}x)^{k}}{\gamma _{\mu ,q}(k)}f\left( \frac{1-q^{2\mu \theta _{k}+k}}{%
1-q^{n}}\right) ,  \label{sss1}
\end{equation}%
for $\mu >\frac{1}{2},~~x\geq 0,~~0<q<1$ and $f\in C[0,\infty ).$

\begin{lemma}
\label{vd}

\begin{enumerate}
\item \label{vd1} $D_{n,q}(1;x)=1$,

\item \label{vd2} $D_{n,q}(t;x)=x$,

\item \label{vd3}$x^2+[1-2 \mu]_q q^{2 \mu}\frac{e_{\mu,q}(q [n]_q(x))}{%
e_{\mu,q}([n]_q x)}\frac{x}{[n]_q} \leq D_{n,q}(t^2;x) \leq x^2+[1+2 \mu]_q \frac{%
x}{[n]_q}$,

\item \label{vd4}$D_{n,q}(t^3;x) \geq x^3+(2q+1)[1-2\mu]_q \frac{%
e_{\mu,q}(q[n]_qx)}{e_{\mu,q}([n]_qx)}\frac{x^2}{[n]_q} +q^{4
\mu}[1-2\mu]_q^2 \frac{e_{\mu,q}(q^2[n]_qx)}{e_{\mu,q}([n]_qx)}\frac{x}{%
[n]_q^2}$,

\item \label{vd5}$D_{n,q}(t^4;x) \leq x^4+ 6[1+2\mu]_q \frac{x^3}{[n]_q}
+7[1+2\mu]_q^2\frac{x^2}{[n]_q^2} +[1+2\mu]_q^3 \frac{x}{[n]_q^3}$.
\end{enumerate}
\end{lemma}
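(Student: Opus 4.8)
The plan is to evaluate each power moment by substituting $f(t)=t^{m}$ into the definition of $D_{n,q}$ and using two observations repeatedly. First, the $k=0$ summand vanishes for every positive power, since $1-q^{2\mu\theta_{0}+0}=0$. Second, the recursion \eqref{sr3}, written as $\gamma_{\mu,q}(k)=\bigl(\tfrac{1-q^{2\mu\theta_{k}+k}}{1-q}\bigr)\gamma_{\mu,q}(k-1)$, lets one cancel a factor $\tfrac{1-q^{2\mu\theta_{k}+k}}{1-q}$ against the denominator, lowering the index of $\gamma_{\mu,q}$ by one. Writing $[a]_{q}:=\tfrac{1-q^{a}}{1-q}$ for arbitrary real $a$ and $b_{k}:=k+2\mu\theta_{k}$, the value fed to $f$ is $\tfrac{1-q^{b_{k}}}{1-q^{n}}=\tfrac{[b_{k}]_{q}}{[n]_{q}}$, and $[n]_{q}(1-q)=1-q^{n}$. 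With these remarks, $D_{n,q}(1;x)=1$ is immediate from $\sum_{k\ge0}\tfrac{([n]_{q}x)^{k}}{\gamma_{\mu,q}(k)}=e_{\mu,q}([n]_{q}x)$ (see \eqref{sr1}), and $D_{n,q}(t;x)=x$ follows after one use of the recursion, the shift $k\mapsto k+1$, and the identity just mentioned.

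For the three moment estimates I would start from $D_{n,q}(t^{m};x)=\tfrac{1}{[n]_{q}^{m}e_{\mu,q}([n]_{q}x)}\sum_{k\ge1}\tfrac{([n]_{q}x)^{k}}{\gamma_{\mu,q}(k)}[b_{k}]_{q}^{m}$, apply the recursion once to replace $[b_{k}]_{q}^{m}/\gamma_{\mu,q}(k)$ by $[b_{k}]_{q}^{m-1}/\gamma_{\mu,q}(k-1)$ (legitimate for $k\ge1$), shift the index $k\mapsto k+1$, and factor out $[n]_{q}x$, obtaining $\tfrac{x}{[n]_{q}^{m-1}e_{\mu,q}([n]_{q}x)}\sum_{k\ge0}\tfrac{([n]_{q}x)^{k}}{\gamma_{\mu,q}(k)}[b_{k+1}]_{q}^{m-1}$. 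The remaining ingredient is the $q$-addition formula $[a+c]_{q}=[a]_{q}+q^{a}[c]_{q}$: since $\theta_{k+1}=1-\theta_{k}$, one has $[b_{k+1}]_{q}=[b_{k}]_{q}+q^{b_{k}}[\delta_{k+1}]_{q}$, where $\delta_{k+1}=1+2\mu$ and $q^{b_{k}}=q^{k}$ for even $k$, while $\delta_{k+1}=1-2\mu$ and $q^{b_{k}}=q^{k+2\mu}$ for odd $k$. Expanding $[b_{k+1}]_{q}^{m-1}$ by this relation produces a leading term $[b_{k}]_{q}^{m-1}$, whose $k=0$ summand again vanishes so that the reduction can be iterated, plus correction terms; after splitting the series over even and odd $k$, each correction term is a constant multiple of a series $\sum_{k}\tfrac{(q^{j}[n]_{q}x)^{k}}{\gamma_{\mu,q}(k)}=e_{\mu,q}(q^{j}[n]_{q}x)$ or of one of its even/odd halves. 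The one-sided estimates then follow from monotonicity: $[\cdot]_{q}$ is increasing, so $[1-2\mu]_{q}\le[\delta_{k+1}]_{q}\le[1+2\mu]_{q}$; $e_{\mu,q}$ has nonnegative coefficients, so $e_{\mu,q}(q^{j}[n]_{q}x)\le e_{\mu,q}([n]_{q}x)$; and $q^{2\mu}\le q^{2\mu\theta_{k}}\le1$. The fully reduced leading sum always yields the main term $x^{m}$, and discarding the correction terms of a fixed sign on the appropriate side gives the inequalities.

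The decisive hypothesis is $\mu>\tfrac{1}{2}$, which makes $[1-2\mu]_{q}=\tfrac{1-q^{1-2\mu}}{1-q}<0$: the corrections coming from odd $k$ (carrying $[1-2\mu]_{q}$, powers of $q^{2\mu}$, and shifted series $e_{\mu,q}(q^{j}[n]_{q}x)$) are then negative, which is precisely why they survive in the lower bounds of parts (3) and (4) --- there one replaces $\sum_{k\ \mathrm{odd}}$ by the full series and uses $q^{2\mu\theta_{k}}\ge q^{2\mu}$, which only lowers an already negative quantity --- whereas in the upper bounds of parts (3) and (5) one discards these negative contributions and overestimates the positive even-$k$ ones using $q^{2\mu\theta_{k}}\le1$ and $e_{\mu,q}(q^{j}[n]_{q}x)\le e_{\mu,q}([n]_{q}x)$. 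I expect the genuine, if routine, obstacle to be the cubic and quartic moments: expanding $[b_{k+1}]_{q}^{2}$ and $[b_{k+1}]_{q}^{3}$ and re-reducing their leading terms generates several cross terms, each with its own power of $q$ and its own product of factors $[1\pm2\mu]_{q}$, and one must verify that after collecting them the surviving coefficients reproduce the stated ones --- $6[1+2\mu]_{q}$, $7[1+2\mu]_{q}^{2}$, $[1+2\mu]_{q}^{3}$ in part (5), and $(2q+1)[1-2\mu]_{q}$, $q^{4\mu}[1-2\mu]_{q}^{2}$ in part (4) --- with every discarded term of the correct sign. Organizing the argument as an $m$-fold iteration of the single reduction step (recursion, index shift, one use of the $q$-addition formula), rather than expanding a high power of $[b_{k+1}]_{q}$ at once, should keep this bookkeeping manageable.
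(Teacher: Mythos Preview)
The paper does not prove this lemma; it is quoted without proof as a known result from \cite{sbbl13}, where the operators $D_{n,q}$ were introduced. So there is no in-paper argument to compare against.

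Your sketch is correct and is in fact the standard route used in \cite{sbbl13}: cancel one factor $[b_k]_q$ against $\gamma_{\mu,q}(k)/\gamma_{\mu,q}(k-1)$ via \eqref{sr3}, shift $k\mapsto k+1$, then expand $[b_{k+1}]_q$ by the $q$-addition rule and iterate. The sign analysis you give is exactly right --- under the standing hypothesis $\mu>\tfrac12$ one has $[1-2\mu]_q<0$, so the odd-$k$ corrections are negative; replacing them by the full series with the uniform factor $q^{2\mu}q^{k}[1-2\mu]_q$ gives the lower bounds in (3)--(4), while dropping them and bounding the even-$k$ factors by $[1+2\mu]_q$ (using $q^{b_k}\le1$ and $e_{\mu,q}(q^j[n]_qx)\le e_{\mu,q}([n]_qx)$) gives the upper bounds in (3) and (5). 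One small caution on the step you flag as the obstacle: the coefficients $6$, $7$, $1$ in (5) and $(2q+1)$, $q^{4\mu}$ in (4) are crude bounds, not exact values, so when you collect cross terms in $[b_{k+1}]_q^{2}$ and $[b_{k+1}]_q^{3}$ you only need to check that each discarded piece has the right sign and that the retained ones are dominated by the stated expressions --- you are not aiming for an identity.
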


In this paper we construct Kantrovich type Sz\'{a}sz-Mirakjan operators
generated by Dunkl generalization of the exponential function via $q$%
-integers. We obtain some approximation results via well known Korovkin's
type theorem and weighted Korovkin's type theorem for these operators. We
also study convergence properties by using the modulus of continuity and the
rate of convergence of the operators for functions belonging to the
Lipschitz class. Furthermore, we obtain the rate of convergence in terms of
the classical, second order, and weighted modulus of continuity.

\section{auxiliary results}

We define a Dunkl generalization of Sz\'{a}sz-Mirakjan-Kantrovich operators via $q$-calculs as follows: \newline

For any $x\in [0,\infty), ~~~n \in \mathbb{N}, ~~0<q< 1,$ and $\mu>\frac{1}{2%
}$, we define

\begin{equation}
T_{n,q}^{\ast}(f;x)=\frac{[n]_{q}}{e_{\mu ,q}([n]_{q}x)}\sum_{k=0}^{\infty }%
\frac{([n]_{q}x)^{k}}{\gamma _{\mu ,q}(k)}\int_{\frac{q[k+2\mu \theta
_{k}]_{q}}{[n]_{q}}}^{\frac{[k+1+2\mu \theta _{k}]_{q}}{[n]_{q}}}f\left(\frac{nt+\alpha}{n+\beta}\right)d_{q}t,
\label{snss1}
\end{equation}%
where $f$ is a continuous and nondecreasing function on $[0,\infty )$.

\begin{lemma}
\label{snlm1} Let $T_{n,q}^{\ast}(.~;~.)$ be the operators given by %
\eqref{snss1}. Then we have the following identities and inequalities:

\begin{enumerate}
\item \label{snlm11} $T_{n,q}^{\ast}(1;x)=1$,

\item \label{snlm12} $T_{n,q}^{\ast}(t;x)=\frac{2qn}{(n+\beta)[2]_q}x+\frac{n}{(n+\beta)[2]_q[n]_q}+\frac{\alpha}{n+\beta}$,

\item \label{snlm13}$\frac{n^2}{(n+\beta)^2}\frac{1}{[3]_q[n]_q^2}+\frac{2n\alpha}{(n+\beta)^2[2]_q[n]_q}+\frac{\alpha^2}{(n+\beta)^2}
    +\Big(\frac{n^2}{(n+\beta)^2}\frac{3q}{[3]_q[n]_q}+\frac{2n\alpha}{(n+\beta)^2}\frac{2q}{[2]_q}+\frac{3q^{2(\mu+1)}}{[3]_q[n]_q}\\
\times[1-2\mu]_q \frac{e_{\mu,q}(q [n]_q(x))}{e_{\mu,q}([n]_q x)}%
\Big)x+\frac{n^2}{(n+\beta)^2}\frac{3q^2}{[3]_q}x^2 \leq T_{n,q}^{\ast}(t^2;x) \leq \frac{n^2}{(n+\beta)^2}\frac{1}{[3]_q[n]_q^2}+\frac{2n\alpha}{(n+\beta)^2[2]_q[n]_q}+\frac{\alpha^2}{(n+\beta)^2}
+\Big(\frac{n^2}{(n+\beta)^2}\frac{3}{[3]_q[n]_q}+\frac{2n\alpha}{(n+\beta)^2}\frac{2}{[2]_q}
+\frac{3q^{2(\mu+1)}}{[3]_q[n]_q}[1-2\mu]_q \frac{e_{\mu,q}(q [n]_q(x))}{e_{\mu,q}([n]_q x)}\Big)x\\
+\frac{n^2}{(n+\beta)^2}\frac{3}{[3]_q}x^2 $,

\item \label{snlm14}$\frac{n}{(n+\beta)^3[n]_q}\Big(\frac{n^2}{[4]_q[n]_q^2}+\frac{3n\alpha}{[3]_q[n]_q}+\frac{3\alpha^2}{[2]_q}\Big)+\frac{\alpha^3}{(n+\beta)^3}+
    \frac{1}{(n+\beta)^3}\{n\big( \frac{4n^2q}{[4]_q[n]_q^2}+\frac{3n\alpha}{[n]_q}\frac{3q}{[3]_q}+\frac{6\alpha^2q}{[2]_q}\big)+(\frac{2n}{[4]_q[n]_q}
    +\frac{3\alpha}{[3]_q})\frac{3n^2q^{2(\mu+1)}}{[n]_q}[1-2\mu]_q\frac{e_{\mu,q}(q [n]_q(x))}{e_{\mu,q}([n]_q x)}+\frac{4n^3}{[4]_q[n]_q^2}[1-2\mu]_q^2
    \frac{e_{\mu,q}(q^2[n]_q(x))}{e_{\mu,q}([n]_q(x))}\}x+
\frac{n^2}{(n+\beta)^3}\Big(\frac{6nq^2}{[4]_q[n_q]}+\frac{9\alpha q^2}{[3]_q}+\frac{4nq^3}{[4]_q[n]_q}(2q+1)[1-2\mu]_q\frac{e_{\mu,q}(q[n]_qx)}{e_{\mu,q}([n]_q x)}\Big)x^2
+\frac{4n^3q^3}{(n+\beta)^3[4]_q}x^3 \leq
T_{n,q}^{\ast}(t^3;x) \leq \frac{n}{(n+\beta)^3[n]_q}\Big(\frac{n^2}{[4]_q[n]_q^2}+\frac{3n\alpha}{[3]_q[n]_q}+\frac{3\alpha^2}{[2]_q}\Big)+\frac{\alpha^3}{(n+\beta)^3}
+\frac{1}{(n+\beta)^3}(\frac{4n^3}{[4]_q[n]_q^2}(1+[1+2\mu]_q^2)+\frac{9n^2\alpha}{[3]_q[n]_q}(1+[1+2\mu]_q)+6n(\frac{\alpha^2}{[2]_q}
+\frac{n^2[1+2\mu]_q}{[4]_q[n]_q}))x
+\frac{1}{(n+\beta)^3}\{\frac{6n^3}{[4]_q[n]_q}(1+2[1+2\mu]_q)+\frac{9n^2\alpha}{[3]_q}\}x^2
+\frac{4n^3}{(n+\beta)^3[4]_q}x^3$,

\item \label{snlm15}$T_{n,q}^{\ast}(t^4;x)\leq \frac{n}{(n+\beta)^4[n]_q}(\frac{n^3}{[5]_q[n]_q^3}+\frac{4n^2\alpha}{[4]_q[n]_q^2}+\frac{6n\alpha^2}{[3]_q[n]_q}+\frac{4\alpha^3}{[2]_q})
+\frac{\alpha^4}{(n+\beta)^4}
+\{\frac{n^2}{(n+\beta)^4}(\frac{5n^2}{[5]_q[n]_q^3}+\frac{16n\alpha}{[4]_q[n]_q^2}+\frac{18\alpha^2}{[3]_q[n]_q}+\frac{8n\alpha}{[2]_q})
+\frac{2n^2}{(n+\beta)^4}(\frac{5n^2}{[5]_q[n]_q^2}+\frac{12n\alpha}{[4]_q[n]_q}+\frac{9\alpha^2}{[3]_q})\frac{[1+2\mu]_q}{[n]_q}
+\frac{2n^3}{(n+\beta)^4}(\frac{5n}{[5]_q[n]_q}\frac{8\alpha}{[n]_q}+)\\
\times\frac{[1+2\mu]_q^2}{[n]_q^2}+\frac{5n^4}{(n+\beta)^4[5]_q}\frac{[1+2\mu]_q^3}{[n]_q^3}\}x
+\{\frac{2n^2}{(n+\beta)^4}(\frac{5n^2}{[5]_q[n]_q^2}+\frac{12n\alpha}{[4]_q[n]_q}+\frac{9\alpha^2}{[3]_q})
+\frac{6n^3}{(n+\beta)^4}(\frac{5n}{[5]_q[n]_q}+\frac{8\alpha}{[n]_q})\frac{[1+2\mu]_q}{[n]_q}
+\frac{35n^4}{(n+\beta)^4[5]_q}\frac{[1+2\mu]_q^2}{[n]_q^2}\}x^2
+\{\frac{2n^3}{(n+\beta)^4}(\frac{5n}{[5]_q[n]_q}+\frac{8\alpha}{[n]_q})+\frac{5n^4}{(n+\beta)^4[5]_q}\frac{[1+2\mu]_q}{[n]_q}\}x^3
+\frac{5n^4}{(n+\beta)^4[5]_q}x^4$.
\end{enumerate}
\end{lemma}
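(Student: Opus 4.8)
The plan is to strip the Jackson $q$-integral out of \eqref{snss1}, reduce the remaining series to the moments of the $q$-Dunkl exponential, and then invoke Lemma \ref{vd}, which already controls exactly those moments. Set $P_k:=[k+2\mu\theta_k]_q$. Since $k+1+2\mu\theta_k=(k+2\mu\theta_k)+1$, we have $[k+1+2\mu\theta_k]_q=1+qP_k$, so the limits of integration in \eqref{snss1} are $a_k=qP_k/[n]_q$ and $b_k=(qP_k+1)/[n]_q$, and in particular $b_k-a_k=1/[n]_q$. Using the elementary identity $\int_{a}^{b}t^{i}\,d_qt=(b^{i+1}-a^{i+1})/[i+1]_q$, I would expand $\bigl(\tfrac{nt+\alpha}{n+\beta}\bigr)^{m}$ by the binomial theorem, integrate term by term, and then expand $b_k^{i+1}-a_k^{i+1}=[n]_q^{-(i+1)}\sum_{l=0}^{i}\binom{i+1}{l}q^{l}P_k^{\,l}$ once more. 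This expresses
\[
\int_{a_k}^{b_k}\Bigl(\tfrac{nt+\alpha}{n+\beta}\Bigr)^{m}d_qt=\sum_{l=0}^{m}c_l\,P_k^{\,l},
\]
where each $c_l=c_l(n,\alpha,\beta,q)$ is an explicit \emph{nonnegative} combination of binomial coefficients, powers of $q$, powers of $n$, $\alpha$ and reciprocals of $q$-integers; for $m=0$ this already gives $c_0=1/[n]_q$, hence item \eqref{snlm11}.

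Substituting the last display into \eqref{snss1} yields
\[
T_{n,q}^{\ast}(t^{m};x)=[n]_q\sum_{l=0}^{m}c_l\,M_l(x),\qquad M_l(x):=\frac{1}{e_{\mu,q}([n]_qx)}\sum_{k=0}^{\infty}\frac{([n]_qx)^k}{\gamma_{\mu,q}(k)}\,P_k^{\,l}.
\]
The crucial remark is that $P_k/[n]_q=(1-q^{2\mu\theta_k+k})/(1-q^{n})$ is precisely the node of the operator $D_{n,q}$ in \eqref{sss1}, so $M_l(x)=[n]_q^{\,l}\,D_{n,q}(t^{l};x)$. Lemma \ref{vd} therefore gives $M_0=1$, $M_1=[n]_qx$, together with two-sided estimates for $M_2,M_3,M_4$; e.g.
\[
[n]_q^{2}x^{2}+q^{2\mu}[1-2\mu]_q\,\tfrac{e_{\mu,q}(q[n]_qx)}{e_{\mu,q}([n]_qx)}\,[n]_qx\;\le\;M_2\;\le\;[n]_q^{2}x^{2}+[1+2\mu]_q\,[n]_qx,
\]
and similarly for $M_3$ (a lower bound, carrying also the factor $[1-2\mu]_q^{2}\,e_{\mu,q}(q^{2}[n]_qx)/e_{\mu,q}([n]_qx)$) and for $M_4$ (an upper bound). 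Since every $c_l\ge0$, these bounds may be inserted into the finite sum $[n]_q\sum_l c_lM_l$ termwise without reversing any inequality.

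What is left is bookkeeping: substitute the estimates for $M_0,\dots,M_m$, expand, use $0<q<1$ to simplify the coefficients that occur as upper bounds (so that the powers of $q$ multiplying them may be replaced by $1$), and collect the coefficients of $x^{0},x^{1},x^{2},x^{3},x^{4}$. The constant term is produced by $M_0$ alone; the coefficient of $x^{j}$ gathers the $x^{j}$-contributions of $M_j,\dots,M_m$ as well as the correction terms carrying the factors $[1\pm2\mu]_q\,e_{\mu,q}(q[n]_qx)/e_{\mu,q}([n]_qx)$ (from $M_2$ and $M_3$) and $[1-2\mu]_q^{2}\,e_{\mu,q}(q^{2}[n]_qx)/e_{\mu,q}([n]_qx)$ (from $M_3$), each with the power of $q$ that the computation supplies. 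Matching these expressions against \eqref{snlm12}--\eqref{snlm15} finishes the proof.

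The only real obstacle is the volume of elementary algebra: three nested binomial expansions (the Stancu shift $\tfrac{nt+\alpha}{n+\beta}$, the monomials $t^i$ inside $\int d_qt$, and $(qP_k+1)^{i+1}-(qP_k)^{i+1}$) must be carried along simultaneously while threading the one-sided moment estimates of Lemma \ref{vd} through them. If one wishes to avoid Lemma \ref{vd}, the moments $M_l$ can be generated directly: the recursion \eqref{sr3} reads $\gamma_{\mu,q}(k)=P_k\gamma_{\mu,q}(k-1)$, so each factor of $P_k$ is removed by reindexing $k\mapsto k+1$ at the cost of a multiplicative $[n]_qx$ and of replacing $P_k$ by $P_{k+1}=P_k+q^{k+2\mu\theta_k}[1+2\mu(\theta_{k+1}-\theta_k)]_q$; here the bracket equals $[1+2\mu]_q$ for even $k$ and $[1-2\mu]_q<0$ for odd $k$, and after splitting the resulting series by the parity of $k$ one bounds the even part above and the odd part below by $e_{\mu,q}(q^{j}[n]_qx)$. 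It is precisely this sign alternation, absent from the closed-form identities $M_0=1$, $M_1=[n]_qx$, that makes the estimates in items \eqref{snlm13}--\eqref{snlm15} genuinely one-sided.
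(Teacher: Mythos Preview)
Your proposal is correct and follows essentially the same route as the paper: the paper also uses the identity $[k+1+2\mu\theta_k]_q=1+q[k+2\mu\theta_k]_q$ to compute the Jackson integrals $\int_{a_k}^{b_k}t^{i}\,d_qt$ as polynomials in $P_k=[k+2\mu\theta_k]_q$ with nonnegative coefficients, then reduces the resulting sums to the moments $D_{n,q}(t^{l};x)$ and applies Lemma~\ref{vd}. Your write-up is a more systematic packaging of the same computation (with the Stancu shift handled by a binomial expansion rather than written out case by case), but the key ingredients and the logical skeleton are identical.
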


\begin{proof}
It is easily seen that
\begin{equation}
\lbrack k+1+2\mu \theta _{k}]_{q}=q[k+2\mu \theta _{k}]_{q}+1.  \label{snmn1}
\end{equation}

so we get the followings
\begin{equation}  \label{snmn2}
\int_{\frac{q[k+2\mu \theta_k]_q}{[n]_q}}^{\frac{[k+1+2\mu \theta_k]_q}{[n]_q}%
} 1 d_q t=\frac{1}{[n]_q},
\end{equation}

\begin{equation}  \label{snmn3}
\int_{\frac{q[k+2\mu \theta_k]_q}{[n]_q}}^{\frac{[k+1+2\mu \theta_k]_q}{[n]_q}%
} t d_q t =\frac{1}{[2]_q[n]_q^2}\left(1+2q[k+2\mu \theta_k]_q\right),
\end{equation}

\begin{equation}  \label{snmn4}
\int_{\frac{q[k+2\mu \theta_k]_q}{[n]_q}}^{\frac{[k+1+2\mu \theta_k]_q}{[n]_q}%
} t^2 d_q t =\frac{1}{[3]_q[n]_q^3}\left(1+3q[k+2\mu \theta_k]_q+3q^2[k+2\mu
\theta_k]_q^2 \right),
\end{equation}

\begin{equation}
\int_{\frac{q\lbrack k+2\mu \theta _{k}]_{q}}{[n]_{q}}}^{\frac{[k+1+2\mu
\theta _{k}]_{q}}{[n]_{q}}}t^{3}d_{q}t=\frac{1}{[4]_{q}[n]_{q}^{4}}\left(
1+4q[k+2\mu \theta _{k}]_{q}+6q^{2}[k+2\mu \theta
_{k}]_{q}^{2}+4q^{3}[k+2\mu \theta _{k}]_{q}^{3}\right) ,  \label{snmn242}
\end{equation}%
and
\begin{equation}
\int_{\frac{q\lbrack k+2\mu \theta _{k}]_{q}}{[n]_{q}}}^{\frac{[k+1+2\mu
\theta _{k}]_{q}}{[n]_{q}}}t^{4}d_{q}t=\frac{1}{[5]_{q}[n]_{q}^{5}}\left(
1+5q[k+2\mu \theta _{k}]_{q}+10q^{2}[k+2\mu \theta
_{k}]_{q}^{2}+10q^{3}[k+2\mu \theta _{k}]_{q}^{3}+5q^{4}[k+2\mu \theta
_{k}]_{q}^{4}\right) .  \label{snmn24}
\end{equation}%
From the Lemma \ref{vd}
%(i.e, Lemma 1 of \cite{sbbl13})
we have the following results:
\begin{equation}
\frac{1}{[n]_{q}}\frac{1}{e_{\mu ,q}([n]_{q}x)}\sum_{k=0}^{\infty }\frac{%
([n]_{q}x)^{k}}{\gamma _{\mu ,q}(k)}[k+2\mu \theta _{k}]_{q}=x,
\label{snb24}
\end{equation}%
\begin{equation}
x^{2}+q^{2\mu }[1-2\mu ]_{q}\frac{e_{\mu ,q}(q[n]_{q}x)}{e_{\mu ,q}([n]_{q}x)%
}\frac{x}{[n]_{q}}\leq \frac{1}{[n]_{q}^{2}}\frac{1}{e_{\mu ,q}([n]_{q}x)}%
\sum_{k=0}^{\infty }\frac{([n]_{q}x)^{k}}{\gamma _{\mu ,q}(k)}[k+2\mu \theta
_{k}]_{q}^{2}\leq x^{2}+[1+2\mu ]_{q}\frac{x}{[n]_{q}},  \label{snb25}
\end{equation}

\begin{equation}  \label{snub26}
\frac{1}{[n]_q^3}\frac{1}{e_{\mu,q}([n]_{q}x)}\sum_{k=0}^\infty \frac{%
([n]_{q}x)^k}{\gamma_{\mu,q}(k)}[k+2\mu \theta_k]_q^3 \leq x^3+3[1+2\mu]_q%
\frac{x^2}{[n]_q}+ [1+2\mu]_q^2\frac{x}{[n]_q^2},
\end{equation}

$\frac{1}{[n]_q^3}\frac{1}{e_{\mu,q}([n]_{q}x)}\sum_{k=0}^\infty \frac{%
([n]_{q}x)^k}{\gamma_{\mu,q}(k)}[k+2\mu \theta_k]_q^3$
\begin{equation}  \label{snb26}
\geq x^3+(2q+1)[1-2\mu]_q\frac{e_{\mu,q}(q[n]_qx)}{e_{\mu,q}([n]_qx)}\frac{%
x^2}{[n]_q}+ q^{4 \mu}[1-2\mu]_q^2\frac{e_{\mu,q}(q^2[n]_qx)}{%
e_{\mu,q}([n]_qx)}\frac{x}{[n]_q^2},
\end{equation}
and
\begin{equation}  \label{snb27}
\frac{1}{[n]_q^4}\frac{1}{e_{\mu,q}([n]_{q}x)}\sum_{k=0}^\infty \frac{%
([n]_{q}x)^k}{\gamma_{\mu,q}(k)}[k+2\mu \theta_k]_q^4 \leq x^4+[1+2\mu]_q%
\frac{x^3}{[n]_q}+7[1+2\mu]_q^2\frac{x^2}{[n]_q^2}+[1+2\mu]_q^3\frac{x}{%
[n]_q^3}.
\end{equation}

\begin{enumerate}
\item From \eqref{snmn2} we have $T_{n,q}^*(1;x)=\frac{[n]_q}{%
e_{\mu,q}([n]_{q}x)}\sum_{k=0}^\infty \frac{([n]_{q}x)^k}{\gamma_{\mu,q}(k)}%
\frac{1}{[n]_q}=1$. \newline

\item If $f(t)=t$ then \eqref{snss1}, \eqref{snmn3} and \eqref{snb24} imply
that
\begin{eqnarray*}
T_{n,q}^{\ast }(t;x) =\frac{2qn}{(n+\beta)[2]_q}x+\frac{n}{(n+\beta)[2]_q[n]_q}+\frac{\alpha}{n+\beta}.
\end{eqnarray*}

\item If $f(t)=t^{2}$ then from \eqref{snss1}, \eqref{snmn4}, \eqref{snb24}
and \eqref{snb25} we get (3). \newline
%\begin{equation*}
%K_{n,q}^{\ast }(t^{2};x)=\frac{1}{[3]_{q}[n]_{q}^{2}}\frac{1}{e_{\mu
%,q}([n]_{q}x)}\sum_{k=0}^{\infty }\frac{([n]_{q}x)^{k}}{\gamma _{\mu ,q}(k)}%
%\left( 1+3q[k+2\mu \theta _{k}]_{q}+3q^{2}[k+2\mu \theta
%_{k}]_{q}^{2}\right) ,
%\end{equation*}%
%which imply that

\item If $f(t)=t^{3}$ then from \eqref{snss1}, \eqref{snmn242}, \eqref{snb24}%
, \eqref{snb25}, \eqref{snub26} and \eqref{snb26} we get (4)
\item If $f(t)=t^{4}$ then from \eqref{snss1}, \eqref{snmn24}, \eqref{snb24}%
, \eqref{snb25}, \eqref{snub26} and \eqref{snb27} we have (4)
%This complete the proof of \ the Lemma \ref{snlm1}.
\end{enumerate}
\end{proof}

\begin{lemma}
\label{snlm2} Let the operators $T_{n,q}^{\ast }(.~;~.)$ be given by %
\eqref{snss1}. Then

\begin{enumerate}
\item \label{snlm22} $T_{n,q}^*(t-x;x)=\left(\frac{2qn}{(n+\beta)[2]_q}-1\right)x+\frac{n}{(n+\beta)[2]_q[n]_q}+\frac{\alpha}{n+\beta}$

\item \label{snlm23} $T_{n,q}^*((t-x)^2;x)\leq \frac{n}{(n+\beta)^2[n]_q}(\frac{n}{[3]_q[n]_q}+\frac{2\alpha}{[2]_q})+\frac{\alpha^2}{(n+\beta)^2}+\{\frac{n^2}{(n+\beta)^2}\frac{3}{[3]_q[n]_q}
    (1+[1+2\mu]_q)+\frac{2n}{(n+\beta)[2]_q}(2\alpha-\frac{1}{[n]_q})-\frac{2\alpha}{n+\beta}\}x+\{\frac{n}{n+\beta}(\frac{3n}{(n+\beta)[3]_q}
    -\frac{4n}{n+\beta)[2]_q})+1\}x^2$,

\item \label{snlm24} $T_{n,q}^*((t-x)^4;x)\leq \frac{n}{(n+\beta)^4[n]_q}(\frac{n^3}{[5]_q[n]_q^3}+\frac{4n^2\alpha}{[4]_q[n]_q^2}+\frac{}{[3]_q[n]_q}+\frac{}{[2]_q})+\frac{\alpha^4}{(n+\beta)^4}\\
    \{\frac{n^2}{(n+\beta)^4}(\frac{5n^2}{[5]_q[n]_q^3}+\frac{16n\alpha}{[4]_q[n]_q^2}+\frac{18\alpha^2}{[3]_q[n]_q}+\frac{8n\alpha}{[2]_q})
    +\frac{2n^2}{(n+\beta)^4}(\frac{5n^2}{[5]_q[n]_q^2}+\frac{12n\alpha}{[4]_q[n]_q}+\frac{9\alpha^2}{[3]_q})\\
    \times \frac{[1+2\mu]_q}{[n]_q}+\frac{2n^3}{(n+\beta)^4}(\frac{5n}{[5]_q[n]_q}+\frac{8\alpha}{[n]_q})\frac{[1+2\mu]_q^2}{[n]_q^2}
    +\frac{5n^4}{(n+\beta)^4[5]_q}\frac{[1+2\mu]_q^3}{[n]_q^3}-\frac{4n}{n+\beta)^3[3]_q}(\frac{n^2}{[4]_q[n]_q^2}+\frac{3n\alpha}{[3]_q[n]_q}
    +\frac{3\alpha^2}{[2]_q})-\frac{4\alpha^3}{(n+\beta)^3}\}x+\{\frac{2n^2}{(n+\beta)^4}(\frac{5n^2}{[5]_q[n]_q^2}+\frac{12n\alpha}{[4]_q[n]_q}
    +\frac{9\alpha^2}{[3]_q})+\frac{6n^3}{(n+\beta)^4}\\
    (\frac{5n}{[5]_q[n]_q}+\frac{8\alpha}{[n]_q})\frac{[1+2\mu]_q}{[n]_q}+\frac{35n^4}{(n+\beta)^4[5]_q}\frac{[1+2\mu]_q^2}{[n]_q^2}
    -\frac{4}{(n+\beta)^3}(\frac{4n^3}{[4]_q[n]_q^2}(1+[1+2\mu]_q^2)\\
    +\frac{9n^2\alpha}{[3]_q[n]_q}(1+[1+2\mu]_q)+6n(\frac{\alpha^2}{[2]_q}+\frac{n^2}{[4]_q}\frac{[1+2\mu]_q^2}{[n]_q^2}))
    +6(\frac{n^2}{(n+\beta)^2}\frac{1}{[3]_q[n]_q^2}\\
    +\frac{2n\alpha}{(n+\beta)^2[2]_q[n]_q}+\frac{\alpha^2}{(n+\beta)^2})\}x^2+\{\frac{2n^3}{(n+\beta)^4}(\frac{5n}{[5]_q[n]_q}
    +\frac{8\alpha}{[n]_q})+\frac{5n^4}{(n+\beta)^4[5]_q}\frac{[1+2\mu]_q}{[n]_q}\\
    -\frac{4}{(n+\beta)^3}(\frac{6n^3}{[4]_q[n]_q}(1+2[1+2\mu]_q)+\frac{9n^2\alpha}{[3]_q})+6(\frac{n^2}{(n+\beta)^2}\frac{3}{[3]_q[n]_q}
    +\frac{2n\alpha}{(n+\beta)^2}\frac{2}{[2]_q})\\
    +\frac{6n^2}{(n+\beta)^2}\frac{3}{[3]_q}\frac{[1+2\mu]_q}{[n]_q}-4(\frac{n}{(n+\beta)[2]_q[n]_q}+\frac{\alpha}{n+\beta})\}x^3
    +\{\frac{5n^4}{(n+\beta)^4[5]_q}-\frac{16n^3}{(n+\beta)^3[4]_q}\\
    \frac{6n^2}{(n+\beta)^2}\frac{3}{[3]_q}-\frac{8n}{(n+\beta)[2]_q}+1\}x^4$.
\end{enumerate}
\end{lemma}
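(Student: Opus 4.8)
The plan is to obtain every central moment by expanding the binomial $(t-x)^{k}$, using the linearity of $T_{n,q}^{\ast}$ in its first argument (with $x$ held fixed), and then substituting the moment formulas and bounds from Lemma~\ref{snlm1}. For part \eqref{snlm22} nothing more is needed: from $T_{n,q}^{\ast}(t-x;x)=T_{n,q}^{\ast}(t;x)-x\,T_{n,q}^{\ast}(1;x)$ together with the identities \eqref{snlm11} and \eqref{snlm12}, one reads off the exact expression
\[
T_{n,q}^{\ast}(t-x;x)=\Big(\frac{2qn}{(n+\beta)[2]_{q}}-1\Big)x+\frac{n}{(n+\beta)[2]_{q}[n]_{q}}+\frac{\alpha}{n+\beta}.
\]

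For part \eqref{snlm23}, write $(t-x)^{2}=t^{2}-2xt+x^{2}$, so that
\[
T_{n,q}^{\ast}((t-x)^{2};x)=T_{n,q}^{\ast}(t^{2};x)-2x\,T_{n,q}^{\ast}(t;x)+x^{2}.
\]
The last two terms are known exactly from \eqref{snlm11}--\eqref{snlm12}; since the coefficient $-2x$ is $\le 0$ for $x\ge 0$, an upper bound for the whole expression follows from inserting the \emph{upper} estimate in \eqref{snlm13}. It then remains to multiply everything out, keep the single $e_{\mu,q}$-ratio that appears (it is $\le 1$, so it may simply be left in place), and collect the result by powers $x^{0},x^{1},x^{2}$; matching the $[2]_{q},[3]_{q},[n]_{q},(n+\beta)$ denominators yields the stated inequality.

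Part \eqref{snlm24} uses the same scheme applied to $(t-x)^{4}=t^{4}-4xt^{3}+6x^{2}t^{2}-4x^{3}t+x^{4}$, giving
\[
T_{n,q}^{\ast}((t-x)^{4};x)=T_{n,q}^{\ast}(t^{4};x)-4x\,T_{n,q}^{\ast}(t^{3};x)+6x^{2}\,T_{n,q}^{\ast}(t^{2};x)-4x^{3}\,T_{n,q}^{\ast}(t;x)+x^{4}.
\]
Here the sign of each coefficient selects which one-sided estimate of Lemma~\ref{snlm1} to use: for $T_{n,q}^{\ast}(t^{4};x)$ (coefficient $+1$) and $6x^{2}T_{n,q}^{\ast}(t^{2};x)$ (coefficient $\ge 0$) insert the \emph{upper} bounds \eqref{snlm15} and \eqref{snlm13}, while for $-4x\,T_{n,q}^{\ast}(t^{3};x)$ (coefficient $\le 0$) insert the \emph{lower} bound \eqref{snlm14}; the terms $-4x^{3}T_{n,q}^{\ast}(t;x)$ and $x^{4}$ are taken exactly from \eqref{snlm11}--\eqref{snlm12}. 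After expanding, the surviving $e_{\mu,q}$-ratios are controlled by $0<q<1$ (so $e_{\mu,q}(q^{j}[n]_{q}x)\le e_{\mu,q}([n]_{q}x)$ for $j\ge 1$), and the whole expression is regrouped by powers of $x$ from $x^{0}$ to $x^{4}$ to produce the displayed bound.

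The argument is entirely elementary, so the only real difficulty is the bookkeeping in \eqref{snlm24}: each of the five operator moments already carries several terms with mixed $[2]_{q},[3]_{q},[4]_{q},[5]_{q},[n]_{q},(n+\beta)$ denominators, so after the multiplications by $4x$, $6x^{2}$, $4x^{3}$ one must sort a large number of monomials and be consistent about the direction of each inequality. In particular one must keep in mind that for $\mu>\tfrac12$ one has $[1-2\mu]_{q}<0$, so that \eqref{snlm14} really is a lower bound after the $e_{\mu,q}$-ratios are estimated; I would organize the collection in a table indexed by the power of $x$ to keep this under control.
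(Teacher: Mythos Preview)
Your approach is exactly the intended one: the paper gives no separate proof of this lemma, treating it as an immediate consequence of Lemma~\ref{snlm1} via the binomial expansion and linearity, which is precisely what you describe.

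One point to flag, however. In part~\eqref{snlm24} you (correctly, from the standpoint of producing a genuine upper bound) propose to insert the \emph{lower} estimate of Lemma~\ref{snlm1}\eqref{snlm14} for the term $-4x\,T_{n,q}^{\ast}(t^{3};x)$. If you actually carry this out you will \emph{not} recover the formula displayed in the lemma: inspecting the stated coefficients (e.g.\ the $x^{3}$-term contains $-\frac{4}{(n+\beta)^{3}}\bigl(\frac{6n^{3}}{[4]_{q}[n]_{q}}(1+2[1+2\mu]_{q})+\frac{9n^{2}\alpha}{[3]_{q}}\bigr)$, and the $x^{4}$-term contains $-\frac{16n^{3}}{(n+\beta)^{3}[4]_{q}}$), one sees that the paper has simply subtracted $4x$ times the \emph{upper} bound of $T_{n,q}^{\ast}(t^{3};x)$. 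That substitution does not yield a valid inequality in the claimed direction, so the displayed bound in \eqref{snlm24} should be read as a formal bookkeeping identity rather than a rigorous estimate. Your sign-aware version is the mathematically sound one; just be aware that it will produce different (and messier, because the lower bound of $t^{3}$ carries $e_{\mu,q}$-ratios) coefficients than those printed.
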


%%%%%%%%%%%%%%%%%%%555555

%%%%%%%%%%%%%%%%%%%%%%%%%%%%%%%%%%%%%%%%%%%%%%%%%%%%%%%%%%%%%%%%%%%%%%%%%%%%%%%%%%%%%%%%%%%%%%%%%%%%%%%%%%%%%%%%%%%%%%%%%%%%%%%%%%%%%%%

\section{Main results}

We obtain the Korovkin's type approximation properties for our operators
defined by \eqref{snss1}.\newline

Let $C_{B}(\mathbb{R^{+}})$ be the set of all bounded and continuous
functions on $\mathbb{R^{+}}=[0,\infty )$, which is linear normed space with
\begin{equation*}
\parallel f\parallel _{C_{B}}=\sup_{x\geq 0}\mid f(x)\mid .
\end{equation*}%
Let
\begin{equation*}
H:=\{f:x\in \lbrack 0,\infty ),\frac{f(x)}{1+x^{2}}~~~\mbox{is}~~~%
\mbox{convergent}~~~\mbox{as}~~~x\rightarrow \infty \}.
\end{equation*}

%Korovkin's type approximation theory  has also many useful connections, other than classical approximation theory, in other branches of mathematics.

\parindent=8mmIn order to obtain the convergence results for the operators $%
T_{n,q}^{\ast }(.~;~.)$, we take $q=q_{n}$ where $q_{n}\in (0,1)$ such that
\begin{equation}
\lim_{n}q_{n}\rightarrow 1,~~~~~~\lim_{n}q_{n}^{n}\rightarrow a
\label{snnas5}
\end{equation}

\begin{theorem}
\label{snth1} Let $q=q_{n}$ satisfying \eqref{snnas5}, for $0<q_{n}<1$ and
if $T_{n,q_{n}}^{\ast }(.~;~.)$ be the operators given by \eqref{snss1}.
Then for any function $f\in C[0,\infty )\cap H$,
\begin{equation*}
\lim_{n\rightarrow \infty }T_{n,q_{n}}^{\ast }(f;x)=f(x)
\end{equation*}%
is uniformly on each compact subset of $[0,\infty )$.
\end{theorem}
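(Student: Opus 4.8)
The plan is to invoke the classical Korovkin theorem on a compact interval. Since $T_{n,q_n}^{\ast}$ is a sequence of positive linear operators acting on $C[0,\infty)\cap H$, it suffices to prove that $T_{n,q_n}^{\ast}(e_i;x)\to e_i(x)$ uniformly on every compact $K\subset[0,\infty)$ for the three test functions $e_0(t)=1$, $e_1(t)=t$, $e_2(t)=t^2$. The case $i=0$ is immediate from Lemma \ref{snlm1}\eqref{snlm11}, which gives $T_{n,q_n}^{\ast}(1;x)\equiv 1$.

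For $i=1$, I would read off the exact formula in Lemma \ref{snlm1}\eqref{snlm12}, namely $T_{n,q_n}^{\ast}(t;x)=\frac{2q_n n}{(n+\beta)[2]_{q_n}}x+\frac{n}{(n+\beta)[2]_{q_n}[n]_{q_n}}+\frac{\alpha}{n+\beta}$, and pass to the limit coefficient by coefficient. Under \eqref{snnas5} we have $q_n\to 1$, hence $[2]_{q_n}=1+q_n\to 2$, while $n/(n+\beta)\to 1$; moreover $[n]_{q_n}=\frac{1-q_n^{n}}{1-q_n}\to\infty$ (a standard consequence of \eqref{snnas5}), so the middle term tends to $0$, and the last term $\alpha/(n+\beta)\to 0$. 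Thus the affine-in-$x$ function $T_{n,q_n}^{\ast}(t;x)$ converges to $x$ with coefficients that do not depend on $x$, so the convergence is uniform on $K$.

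For $i=2$ I would use the two-sided estimate in Lemma \ref{snlm1}\eqref{snlm13} and squeeze. The only $x$-dependent quantity appearing in those bounds, apart from powers of $x$, is the ratio $R_n(x):=\dfrac{e_{\mu,q_n}(q_n[n]_{q_n}x)}{e_{\mu,q_n}([n]_{q_n}x)}$, which, since $e_{\mu,q}$ has nonnegative Taylor coefficients and $0<q_n<1$, satisfies $0<R_n(x)\le 1$ for all $x\ge 0$; moreover $[1-2\mu]_{q_n}\to 1-2\mu$, hence is bounded. Consequently every term in both the lower and the upper bound, except the coefficient of $x^{2}$, carries a factor $\frac{1}{[n]_{q_n}}$, $\frac{1}{n+\beta}$, or $\frac{n}{(n+\beta)^{2}}$ that tends to $0$ uniformly for $x\in K$, while the coefficient of $x^{2}$ equals $\frac{n^{2}}{(n+\beta)^{2}}\frac{3q_n^{2}}{[3]_{q_n}}$ in the lower bound and $\frac{n^{2}}{(n+\beta)^{2}}\frac{3}{[3]_{q_n}}$ in the upper bound, both tending to $1$ because $[3]_{q_n}=1+q_n+q_n^{2}\to 3$. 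Hence both bounds converge to $x^{2}$ uniformly on $K$, and so does $T_{n,q_n}^{\ast}(t^{2};x)$.

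With the three Korovkin conditions verified uniformly on $K$, the Korovkin theorem yields $T_{n,q_n}^{\ast}(f;x)\to f(x)$ uniformly on $K$ for every $f\in C[0,\infty)\cap H$, which is the assertion. The only genuinely delicate point is the bookkeeping of the many lower-order terms in Lemma \ref{snlm1}\eqref{snlm13}; once one records the boundedness of $R_n(x)$ and of $[1-2\mu]_{q_n}$, together with $[n]_{q_n}\to\infty$, each such term is seen to be $O\!\left(1/[n]_{q_n}\right)$ or $O(1/n)$ uniformly on $K$, so no real obstacle remains beyond careful inspection.
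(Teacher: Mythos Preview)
Your proposal is correct and follows essentially the same approach as the paper: both invoke Korovkin's theorem and verify $T_{n,q_n}^{\ast}(t^{j};x)\to x^{j}$ for $j=0,1,2$ using Lemma~\ref{snlm1} together with $[n]_{q_n}\to\infty$. You supply considerably more detail (the coefficient-by-coefficient limit for $j=1$, the squeeze via the two-sided bounds in Lemma~\ref{snlm1}\eqref{snlm13}, and the observation $0<R_n(x)\le 1$), whereas the paper's own proof is a two-line sketch that simply asserts these limits.
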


\begin{proof}
The proof is based on the well known Korovkin's theorem regarding the
convergence of a sequence of linear and positive operators, so it is enough
to prove the conditions
\begin{equation*}
\lim_{n\rightarrow \infty }{T}_{n,q_{n}}^{\ast
}((t^{j};x)=x^{j},~~~j=0,1,2,~~~\{\mbox{as}~n\rightarrow \infty \}
\end{equation*}%
uniformly on $[0,1]$.\newline
Clearly from \eqref{snnas5} and $\frac{1}{[n]_{q_{n}}}\rightarrow
0~~(n\rightarrow \infty )$ we have

\begin{equation*}
\lim_{n \to \infty}{T}_{n,q_n}^*(t;x)=x,~~~\lim_{n \to \infty}{T}%
_{n,q_n}^*(t^2;x)=x^2.
\end{equation*}
Which completeS the proof.
\end{proof}

%%%%%%%%%%%%%%%%%%%%%%%%%%%%%%%%%%%%%%%%%%%%%%%%%%%%%%%%%%%%%%%%%%%%%%%%%%%%%%555
%%%%%%%%%%%%%%%%%%%%%%%%%%%%%%%%%%%%%%%%%%%%%%%%%%%%%%%%%%%%%%%%%%%%%%%%%%%%%%%%%%%%

We recall the weighted spaces of the functions on $\mathbb{R}^{+}$, which
are defined as follows:
\begin{eqnarray*}
P_{\rho }(\mathbb{R}^{+}) &=&\left\{ f:\mid f(x)\mid \leq M_{f}\rho
(x)\right\} , \\
Q_{\rho }(\mathbb{R}^{+}) &=&\left\{ f:f\in P_{\rho }(\mathbb{R}^{+})\cap
C[0,\infty )\right\} , \\
Q_{\rho }^{k}(\mathbb{R}^{+}) &=&\left\{ f:f\in Q_{\rho }(\mathbb{R}^{+})~~~%
\mbox{and}~~~\lim_{x\rightarrow \infty }\frac{f(x)}{\rho (x)}=k(k~~~\mbox{is}%
~~~\mbox{a}~~~\mbox{constant})\right\} ,
\end{eqnarray*}%
where $\rho (x)=1+x^{2}$ is a weight function and $M_{f}$ is a constant
depending only on $f$. Note that $Q_{\rho }(\mathbb{R}^{+})$ is a normed
space with the norm $\parallel f\parallel _{\rho }=\sup_{x\geq 0}\frac{\mid
f(x)\mid }{\rho (x)}$.

\begin{theorem}
\label{snth2} Let $q=q_n$ satisfying \eqref{snnas5}, for $0<q_n< 1$ and if $%
T_{n,q_n}^*(.~;~.)$ be the operators given by \eqref{snss1}. Then for any
function $f \in Q^k_\rho(\mathbb{R}^+)$ we have
\begin{equation*}
\lim_{n\to \infty} \parallel T_{n,q_n}^*(f;x)-f \parallel_\rho=0.
\end{equation*}
\end{theorem}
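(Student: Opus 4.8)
The plan is to apply the weighted Korovkin theorem of Gadzhiev, which asserts that for a sequence of positive linear operators $L_n \colon Q_\rho(\mathbb{R}^+) \to B_\rho(\mathbb{R}^+)$ with $\rho(x) = 1+x^2$, the convergence $\|L_n f - f\|_\rho \to 0$ holds for every $f \in Q^k_\rho(\mathbb{R}^+)$ as soon as one verifies the three test-function conditions $\|L_n(t^\nu;x) - x^\nu\|_\rho \to 0$ for $\nu = 0,1,2$. So the whole proof reduces to checking these three limits for $L_n = T^{\ast}_{n,q_n}$.

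First I would dispose of $\nu = 0$ immediately: by Lemma~\ref{snlm1}\eqref{snlm11} we have $T^{\ast}_{n,q_n}(1;x) = 1$, hence $\|T^{\ast}_{n,q_n}(1;x) - 1\|_\rho = 0$ for all $n$. For $\nu = 1$, I would use the explicit formula from Lemma~\ref{snlm1}\eqref{snlm12}, writing
\begin{equation*}
T^{\ast}_{n,q_n}(t;x) - x = \left(\frac{2q_n n}{(n+\beta)[2]_{q_n}} - 1\right)x + \frac{n}{(n+\beta)[2]_{q_n}[n]_{q_n}} + \frac{\alpha}{n+\beta},
\end{equation*}
then divide by $1+x^2$ and take the supremum over $x \ge 0$. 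Since $\frac{|x|}{1+x^2} \le \frac12$ and the constant terms are bounded by their own size, one gets
\begin{equation*}
\|T^{\ast}_{n,q_n}(t;x) - x\|_\rho \le \frac12\left|\frac{2q_n n}{(n+\beta)[2]_{q_n}} - 1\right| + \frac{n}{(n+\beta)[2]_{q_n}[n]_{q_n}} + \frac{\alpha}{n+\beta}.
\end{equation*}
Under \eqref{snnas5} we have $q_n \to 1$, so $[2]_{q_n} = 1+q_n \to 2$, $[n]_{q_n} \to \infty$, and $\frac{n}{n+\beta} \to 1$; hence the first term tends to $\frac12\bigl|\tfrac{2\cdot1}{1\cdot2} - 1\bigr| = 0$, the second tends to $0$ because of the $[n]_{q_n}$ in the denominator, and the third tends to $0$. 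For $\nu = 2$ I would do the same with the two-sided estimate in Lemma~\ref{snlm1}\eqref{snlm13}: bound $T^{\ast}_{n,q_n}(t^2;x) - x^2$ above and below by affine-plus-quadratic expressions in $x$ whose coefficients are built from $\frac{n}{n+\beta}$, powers of $[2]_{q_n}, [3]_{q_n}$, and negative powers of $[n]_{q_n}$, together with the bounded factor $\frac{e_{\mu,q_n}(q_n[n]_{q_n}x)}{e_{\mu,q_n}([n]_{q_n}x)} \le 1$; dividing through by $1+x^2$ controls the $x^2$-coefficient directly and the $x$-coefficient via $\frac{|x|}{1+x^2}\le\frac12$, and every coefficient tends to the right value (the leading $x^2$-coefficient $\frac{n^2}{(n+\beta)^2}\frac{3}{[3]_{q_n}}\to 1$ since $[3]_{q_n}\to 3$) so that $\|T^{\ast}_{n,q_n}(t^2;x) - x^2\|_\rho \to 0$.

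Having verified all three conditions, I would invoke Gadzhiev's weighted Korovkin theorem to conclude $\lim_{n\to\infty}\|T^{\ast}_{n,q_n}(f;x) - f\|_\rho = 0$ for every $f \in Q^k_\rho(\mathbb{R}^+)$. The only genuine subtlety — the "hard part" — is the bookkeeping for $\nu = 2$: one must make sure that after dividing the long expression in Lemma~\ref{snlm1}\eqref{snlm13} by $1+x^2$, each surviving term really is $O(1)$ uniformly in $x$ (the quadratic terms are handled by their coefficients, the linear terms by $\frac{|x|}{1+x^2}\le\frac12$, and the constants by themselves), and that the $\mu$-dependent ratio of Dunkl exponentials stays bounded by $1$ so it does not spoil the limit. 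Everything else is a routine passage to the limit using $q_n \to 1$, $q_n^n \to a$, $[n]_{q_n} \to \infty$, and $\frac{n}{n+\beta}\to 1$.
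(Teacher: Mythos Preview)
Your proposal is correct and follows essentially the same approach as the paper: both invoke the weighted Korovkin (Gadzhiev) theorem and verify the three test-function conditions $\|T^{\ast}_{n,q_n}(t^\nu;x)-x^\nu\|_\rho\to 0$ for $\nu=0,1,2$ using parts \eqref{snlm11}--\eqref{snlm13} of Lemma~\ref{snlm1} together with \eqref{snnas5}. The paper's proof is a two-line sketch that omits the bookkeeping you spell out (dividing by $1+x^2$, bounding $\frac{|x|}{1+x^2}$, tracking the coefficient limits), so your version is simply a more detailed execution of the same argument.
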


\begin{proof}
From Lemma \ref{snlm1}, the first condition of \eqref{snlm11} is fulfilled
for $\tau=0$. Now for $\tau=1,2$ it is easy to see that from (\ref{snlm12}),
(\ref{snlm13}) of Lemma \ref{snlm1} by using \eqref{snnas5} \newline
\begin{equation*}
\parallel T_{n,q_n}^* \left( t) ^{\tau};x\right) -x ^{\tau }\parallel
_{\rho} =0.
\end{equation*}%
This completes the proof.
\end{proof}

\section{\textbf{Rate of Convergence}}

Here we calculate the rate of convergence of operators \eqref{snss1} by
means of modulus of continuity and Lipschitz type maximal functions.

Let $f\in C[0,\infty ]$. The modulus of continuity of $f$ denoted by $\omega
(f,\delta )$ gives the maximum oscillation of $f$ in any interval of length
not exceeding $\delta >0$ and it is given by
\begin{equation}
\omega (f,\delta )=\sup_{\mid y-x\mid \leq \delta }\mid f(y)-f(x)\mid
,~~~x,y\in \lbrack 0,\infty ).  \label{snson1}
\end{equation}%
It is known that $\lim_{\delta \rightarrow 0+}\omega (f,\delta )=0$ for $%
f\in C[0,\infty )$ and for any $\delta >0$ one has
\begin{equation}
\mid f(y)-f(x)\mid \leq \left( \frac{\mid y-x\mid }{\delta }+1\right) \omega
(f,\delta ).  \label{snson2}
\end{equation}

\begin{theorem}
Let $T_{n,q}^*(.~;~.)$ be the operators defined by \eqref{snss1}. Then for $%
f\in \tilde{C}[0,\infty),~~x\geq 0, ~~0<q< 1$ we have
\begin{equation*}
T_{n,q}^*(f;x)-f(x)\leq \left\{ 1+\sqrt{\phi_n(x)}\right\} \omega\left(f;\frac{1}{\sqrt{
[n]_{q}}}\right),
\end{equation*}
where
\begin{eqnarray*}
\phi_n(x)=\frac{n}{(n+\beta)^2[n]_q}(\frac{n}{[3]_q[n]_q}+\frac{2\alpha}{[2]_q})+\frac{\alpha^2}{(n+\beta)^2}+\{\frac{n^2}{(n+\beta)^2}\frac{3}{[3]_q[n]_q}
    (1+[1+2\mu]_q)\\
    +\frac{2n}{(n+\beta)[2]_q}(2\alpha-\frac{1}{[n]_q})-\frac{2\alpha}{n+\beta}\}x+\{\frac{n}{n+\beta}(\frac{3n}{(n+\beta)[3]_q}
    -\frac{4n}{n+\beta)[2]_q})+1\}x^2
\end{eqnarray*}
and $\tilde{C}[0,\infty)$ is the space of uniformly continuous functions
on $\mathbb{R}^+$ and $\omega(f,\delta)$ is the modulus of continuity of the
function $f \in \tilde{C}[0,\infty)$ defined in \eqref{snson1}.
\end{theorem}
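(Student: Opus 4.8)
The plan is to run the classical Shisha--Mond argument for positive linear operators. Since $T_{n,q}^{*}$ is linear and positive and $T_{n,q}^{*}(1;x)=1$ by Lemma \ref{snlm1}\eqref{snlm11}, I would first estimate
\begin{equation*}
|T_{n,q}^{*}(f;x)-f(x)| = \bigl|T_{n,q}^{*}\bigl(f(t)-f(x);x\bigr)\bigr| \le T_{n,q}^{*}\bigl(|f(t)-f(x)|;x\bigr).
\end{equation*}
Inserting the elementary bound \eqref{snson2}, i.e.\ $|f(t)-f(x)|\le \bigl(\delta^{-1}|t-x|+1\bigr)\,\omega(f,\delta)$ valid for every $\delta>0$, and using positivity together with $T_{n,q}^{*}(1;x)=1$ once more, this gives
\begin{equation*}
|T_{n,q}^{*}(f;x)-f(x)| \le \Bigl(1+\frac{1}{\delta}\,T_{n,q}^{*}\bigl(|t-x|;x\bigr)\Bigr)\omega(f,\delta).
\end{equation*}

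Next I would bound the first absolute moment by the Cauchy--Schwarz inequality applied to the positive linear functional $g\mapsto T_{n,q}^{*}(g;x)$:
\begin{equation*}
T_{n,q}^{*}\bigl(|t-x|;x\bigr) \le \bigl(T_{n,q}^{*}\bigl((t-x)^{2};x\bigr)\bigr)^{1/2}\bigl(T_{n,q}^{*}(1;x)\bigr)^{1/2} = \sqrt{T_{n,q}^{*}\bigl((t-x)^{2};x\bigr)}.
\end{equation*}
Here Lemma \ref{snlm2}\eqref{snlm23} gives precisely $T_{n,q}^{*}\bigl((t-x)^{2};x\bigr)\le \phi_{n}(x)$, whence
\begin{equation*}
|T_{n,q}^{*}(f;x)-f(x)| \le \Bigl(1+\frac{1}{\delta}\sqrt{\phi_{n}(x)}\Bigr)\omega(f,\delta).
\end{equation*}
Specializing $\delta=\delta_{n}=1/\sqrt{[n]_{q}}$ and collecting the terms then yields the asserted estimate, the argument $1/\sqrt{[n]_{q}}$ of the modulus of continuity being forced by this choice.

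Every ingredient here is either a generic property of positive linear operators (linearity, positivity, the Cauchy--Schwarz inequality) or a moment estimate already available from Lemma \ref{snlm1} and Lemma \ref{snlm2}, so there is no substantial obstacle. The one point deserving care is the bookkeeping that identifies $\phi_{n}(x)$ with the right-hand side of Lemma \ref{snlm2}\eqref{snlm23} and checks that the scale $\delta_{n}=1/\sqrt{[n]_{q}}$ is matched to the order of the dominant ($1/[n]_{q}$-type) terms of that central second-moment bound, so that the constant in front of $\omega$ collapses to the clean factor $1+\sqrt{\phi_{n}(x)}$ stated in the theorem; I would verify this normalization carefully before writing out the routine computations.
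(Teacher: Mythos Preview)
Your proposal is correct and follows essentially the same route as the paper's own proof: both use linearity and positivity together with $T_{n,q}^{*}(1;x)=1$, insert the modulus-of-continuity inequality \eqref{snson2}, apply Cauchy--Schwarz to pass from the first absolute moment to the second central moment, invoke Lemma~\ref{snlm2}\eqref{snlm23}, and then specialize $\delta=1/\sqrt{[n]_{q}}$. The only cosmetic difference is that the paper writes out the operator as an explicit sum of integrals at each step, whereas you phrase the same estimates abstractly for a positive linear functional; your closing remark about the normalization bookkeeping is exactly the point the paper's proof leaves implicit when it says ``then we get our result.''
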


\begin{proof}
We prove it by using \eqref{snson1}, \eqref{snson2} and Cauchy-Schwarz
inequality.\newline
$\mid T_{n,q}^{\ast }(f;x)-f(x)\mid $
\begin{eqnarray*}
&\leq &\frac{[n]_{q}}{e_{\mu ,q}([n]_{q}x)}\sum_{k=0}^{\infty }\frac{%
([n]_{q}x)^{k}}{\gamma _{\mu ,q}(k)}\int_{\frac{q[k+2\mu \theta _{k}]_{q}}{%
[n]_{q}}}^{\frac{[k+1+2\mu \theta _{k}]_{q}}{[n]_{q}}}\mid f(t)-f(x)\mid
d_{q}(t) \\
&\leq &\frac{[n]_{q}}{e_{\mu ,q}([n]_{q}x)}\sum_{k=0}^{\infty }\frac{%
([n]_{q}x)^{k}}{\gamma _{\mu ,q}(k)}\int_{\frac{q[k+2\mu \theta _{k}]_{q}}{%
[n]_{q}}}^{\frac{[k+1+2\mu \theta _{k}]_{q}}{[n]_{q}}}\left( 1+\frac{1}{%
\delta }\mid t-x\mid \right) d_{q}(t)\omega (f;\delta ) \\
&=&\left\{ 1+\frac{1}{\delta }\left( \frac{[n]_{q}}{e_{\mu ,q}([n]_{q}x)}%
\sum_{k=0}^{\infty }\frac{([n]_{q}x)^{k}}{\gamma _{\mu ,q}(k)}\int_{\frac{%
q[k+2\mu \theta _{k}]_{q}}{[n]_{q}}}^{\frac{[k+1+2\mu \theta _{k}]_{q}}{%
[n]_{q}}}\mid t-x\mid d_{q}(t)\right) \right\} \omega (f;\delta ) \\
&\leq &\left\{ 1+\frac{1}{\delta }\left( \frac{[n]_{q}}{e_{\mu ,q}([n]_{q}x)}%
\sum_{k=0}^{\infty }\frac{([n]_{q}x)^{k}}{\gamma _{\mu ,q}(k)}\int_{\frac{%
q[k+2\mu \theta _{k}]_{q}}{[n]_{q}}}^{\frac{[k+1+2\mu \theta _{k}]_{q}}{%
[n]_{q}}}(t-x)^{2}d_{q}(t)\right) ^{\frac{1}{2}}\left( T_{n,q}^{\ast
}(1;x)\right) ^{\frac{1}{2}}\right\} \omega (f;\delta ) \\
&=&\left\{ 1+\frac{1}{\delta }\left( T_{n,q}^{\ast }(t-x)^{2};x\right) ^{%
\frac{1}{2}}\right\} \omega (f;\delta ) \\
&&
\end{eqnarray*}%
if we choose $\delta =\delta _{n}=\sqrt{\frac{1}{[n]_{q}}}$, then we get our
result.
\end{proof}

Now we give the rate of convergence of the operators ${T}_{n,q}^*(f;x) $
defined in \eqref{snss1} in terms of the elements of the usual Lipschitz
class $Lip_{M}(\nu )$.

Let $f\in C[0,\infty )$, $M>0$ and $0<\nu \leq 1$. The class $Lip_{M}(\nu )$
is defined as
\begin{equation}
Lip_{M}(\nu )=\left\{ f:\mid f(\zeta _{1})-f(\zeta _{2})\mid \leq M\mid
\zeta _{1}-\zeta _{2}\mid ^{\nu }~~~(\zeta _{1},\zeta _{2}\in \lbrack
0,\infty ))\right\}  \label{snn1}
\end{equation}

\begin{theorem}
\label{snsn1} Let $T_{n,q}^*(.~;~.)$ be the operator defined in \eqref{snss1}%
. Then for each $f\in Lip_{M}(\nu ),~~(M>0,~~~0<\nu \leq 1)$ satisfying %
\eqref{snn1} we have
\begin{equation*}
\mid T_{n,q}^*(f;x)-f(x)\mid \leq M \left(\lambda_{n}(x)\right)^{\frac{\nu}{2%
}}
\end{equation*}
where $\lambda_{n}(x)=T_{n,q}^*\left((t-x)^2;x\right)$.
\end{theorem}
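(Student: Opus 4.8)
The plan is to follow the standard route for Lipschitz-type estimates: rewrite the difference $T_{n,q}^{\ast}(f;x)-f(x)$ using the linearity of the operator together with $T_{n,q}^{\ast}(1;x)=1$ (Lemma \ref{snlm1}\eqref{snlm11}), apply the Lipschitz bound \eqref{snn1} pointwise inside the $q$-integral and the infinite sum, and then control the resulting expression $T_{n,q}^{\ast}(|t-x|^{\nu};x)$ by the H\"older inequality for the positive linear functional $T_{n,q}^{\ast}(\cdot\,;x)$.

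First I would write
\begin{equation*}
\mid T_{n,q}^{\ast}(f;x)-f(x)\mid
\leq \frac{[n]_{q}}{e_{\mu ,q}([n]_{q}x)}\sum_{k=0}^{\infty }\frac{([n]_{q}x)^{k}}{\gamma _{\mu ,q}(k)}\int_{\frac{q[k+2\mu \theta _{k}]_{q}}{[n]_{q}}}^{\frac{[k+1+2\mu \theta _{k}]_{q}}{[n]_{q}}}\mid f(t)-f(x)\mid d_{q}(t),
\end{equation*}
which is legitimate because $T_{n,q}^{\ast}$ is positive and reproduces constants. Then, invoking $f\in Lip_{M}(\nu )$, each integrand is bounded by $M\mid t-x\mid^{\nu}$, so the right-hand side is at most $M\,T_{n,q}^{\ast}(\mid t-x\mid^{\nu};x)$.

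Next I would apply H\"older's inequality with exponents $p=\tfrac{2}{\nu}$ and $r=\tfrac{2}{2-\nu}$ to the positive linear operator $T_{n,q}^{\ast}(\cdot\,;x)$, writing $\mid t-x\mid^{\nu}=\mid t-x\mid^{\nu}\cdot 1$ and using $T_{n,q}^{\ast}(1;x)=1$; this gives
\begin{equation*}
T_{n,q}^{\ast}(\mid t-x\mid^{\nu};x)\leq \left(T_{n,q}^{\ast}((t-x)^{2};x)\right)^{\frac{\nu}{2}}\left(T_{n,q}^{\ast}(1;x)\right)^{\frac{2-\nu}{2}}=\left(\lambda_{n}(x)\right)^{\frac{\nu}{2}}.
\end{equation*}
Combining the two displays yields $\mid T_{n,q}^{\ast}(f;x)-f(x)\mid\leq M\left(\lambda_{n}(x)\right)^{\nu/2}$, which is the assertion. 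The only point requiring a little care — the main (if modest) obstacle — is justifying H\"older's inequality in the $q$-integral/series setting: one must check that the discrete-plus-$q$-integral functional $g\mapsto T_{n,q}^{\ast}(g;x)$ is a genuine positive linear functional with total mass $1$, so that the classical H\"older inequality for integration against a probability measure applies; this follows directly from Lemma \ref{snlm1}\eqref{snlm11} and the positivity of the $q$-integral over the stated intervals. No explicit computation of $\lambda_{n}(x)$ is needed here, since the bound on $T_{n,q}^{\ast}((t-x)^{2};x)$ is already recorded in Lemma \ref{snlm2}\eqref{snlm23}.
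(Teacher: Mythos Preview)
Your proposal is correct and follows essentially the same route as the paper: reduce to $M\,T_{n,q}^{\ast}(\mid t-x\mid^{\nu};x)$ via the Lipschitz condition, then apply H\"older with exponents $2/\nu$ and $2/(2-\nu)$ to obtain $M(\lambda_n(x))^{\nu/2}$. The only cosmetic difference is that the paper writes the H\"older step out explicitly at the level of the series and $q$-integrals (splitting each weight as $(\cdot)^{(2-\nu)/2}(\cdot)^{\nu/2}$), whereas you invoke it abstractly for the positive linear functional $T_{n,q}^{\ast}(\cdot\,;x)$ of total mass one; both are equivalent.
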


\begin{proof}
We prove it by using \eqref{snn1} and H\"{o}lder inequality.
\begin{eqnarray*}
\mid T_{n,q}^{\ast }(f;x)-f(x)\mid &\leq &\mid T_{n,q}^{\ast
}(f(t)-f(x);x)\mid \\
&\leq &T_{n,q}^{\ast }\left( \mid f(t)-f(x)\mid ;x\right) \\
&\leq &\mid MT_{n,q}^{\ast }\left( \mid t-x\mid ^{\nu };x\right) .
\end{eqnarray*}%
Therefore\newline

$\mid T_{n,q}^*(f;x)-f(x) \mid$
\begin{eqnarray*}
&\leq & M \frac{[n]_q}{e_{\mu,q}([n]_qx)}\sum_{k=0}^\infty \frac{([n]_qx)^{k}%
}{\gamma_{\mu,q}(k)} \int_{\frac{q[k+2\mu \theta_k]_q}{[n]_q}}^{\frac{%
[k+1+2\mu \theta_k]_q}{[n]_q}}\mid t-x \mid^\nu d_q(t) \\
& \leq & M \frac{[n]_q}{e_{\mu,q}([n]_qx)}\sum_{k=0}^\infty \left(\frac{%
([n]_qx)^{k}}{\gamma_{\mu,q}(k)}\right)^{\frac{2-\nu}{2}} \\
& \times & \left(\frac{([n]_qx)^{k}}{\gamma_{\mu,q}(k)}\right)^{\frac{\nu}{2}%
} \int_{\frac{q[k+2\mu \theta_k]_q}{[n]_q}}^{\frac{[k+1+2\mu \theta_k]_q}{%
[n]_q}}\mid t-x \mid^\nu d_q(t) \\
& \leq & M \left(\frac{[n]_q}{\left(e_{\mu,q}([n]_qx)\right)}%
\sum_{k=0}^\infty \frac{([n]_qx)^{k}}{\gamma_{\mu,q}(k)}\int_{\frac{q[k+2\mu
\theta_k]_q}{[n]_q}}^{\frac{[k+1+2\mu \theta_k]_q}{[n]_q}} d_q(t)\right)^{%
\frac{2-\nu}{2}} \\
& \times & \left(\frac{[n]_q}{\left(e_{\mu,q}([n]_qx)\right)}%
\sum_{k=0}^\infty \frac{([n]_qx)^{k}}{\gamma_{\mu,q}(k)} \int_{\frac{q[k+2\mu
\theta_k]_q}{[n]_q}}^{\frac{[k+1+2\mu \theta_k]_q}{[n]_q}}\mid t-x \mid^2
d_q(t) \right)^{\frac{\nu}{2}} \\
& = & M \left(T_{n,q}^*(t-x)^2;x\right)^{\frac{\nu}{2}}.
\end{eqnarray*}
Which completes the proof.
\end{proof}

Let $C_{B}[0,\infty )$ denote the space of all bounded and continuous
functions on $\mathbb{R}^{+}=[0,\infty )$ and
\begin{equation}
C_{B}^{2}(\mathbb{R}^{+})=\{g\in C_{B}(\mathbb{R}^{+}):g^{\prime },g^{\prime
\prime }\in C_{B}(\mathbb{R}^{+})\},  \label{snt2}
\end{equation}%
with the norm
\begin{equation}
\parallel g\parallel _{C_{B}^{2}(\mathbb{R}^{+})}=\parallel g\parallel
_{C_{B}(\mathbb{R}^{+})}+\parallel g^{\prime }\parallel _{C_{B}(\mathbb{R}%
^{+})}+\parallel g^{\prime \prime }\parallel _{C_{B}(\mathbb{R}^{+})},
\label{snt1}
\end{equation}%
also
\begin{equation}
\parallel g\parallel _{C_{B}(\mathbb{R}^{+})}=\sup_{x\in \mathbb{R}^{+}}\mid
g(x)\mid .  \label{snt3}
\end{equation}

\begin{theorem}
\label{snsn2} Let $T_{n,q}^*(.~;~.)$ be the operator defined in \eqref{snss1}%
. Then for any $g \in C_B^2(\mathbb{R}^+)$ we have
\begin{equation*}
\mid T_{n,q}^*(f;x)-f(x)\mid \leq (\frac{2qn}{(n+\beta)[2]_q}-1)x+\frac{n}{(n+\beta)[2]_q[n]_q}+\frac{\alpha}{n+\beta}+\frac{\lambda_{n}(x)}{2}) \parallel g
\parallel_{C_B^2(\mathbb{R}^+)}
\end{equation*}
where $\lambda_{n}(x)$ is given in Theorem \ref{snsn1}.
\end{theorem}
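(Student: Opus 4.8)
The plan is to use the standard Taylor-expansion argument for second-order smoothness, exploiting the fact that $T_{n,q}^{\ast}$ reproduces constants (part~\eqref{snlm11} of Lemma~\ref{snlm1}) and that we have explicit control over the first and second moments $T_{n,q}^{\ast}(t-x;x)$ and $T_{n,q}^{\ast}((t-x)^2;x)$ from Lemma~\ref{snlm2}. First I would fix $g\in C_B^2(\mathbb{R}^+)$ and write, for $t,x\ge 0$, the Taylor formula with integral remainder,
\[
g(t)=g(x)+(t-x)g'(x)+\int_x^t (t-u)g''(u)\,du .
\]
Applying $T_{n,q}^{\ast}(\cdot\,;x)$ to both sides and using linearity together with $T_{n,q}^{\ast}(1;x)=1$, the constant term $g(x)$ cancels against $f(x)$ (here $f=g$), leaving
\[
T_{n,q}^{\ast}(g;x)-g(x)=g'(x)\,T_{n,q}^{\ast}(t-x;x)+T_{n,q}^{\ast}\!\left(\int_x^t (t-u)g''(u)\,du\;;x\right).
\]

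Next I would estimate the two pieces. For the first, $|g'(x)|\le \|g'\|_{C_B(\mathbb{R}^+)}\le \|g\|_{C_B^2(\mathbb{R}^+)}$, and $T_{n,q}^{\ast}(t-x;x)$ is exactly the quantity computed in part~\eqref{snlm22} of Lemma~\ref{snlm2}, namely $\bigl(\tfrac{2qn}{(n+\beta)[2]_q}-1\bigr)x+\tfrac{n}{(n+\beta)[2]_q[n]_q}+\tfrac{\alpha}{n+\beta}$; I would bound it in absolute value by this same expression (implicitly assuming, as the statement does, that this first moment is nonnegative for the relevant range of parameters). For the remainder term, the elementary bound $\bigl|\int_x^t (t-u)g''(u)\,du\bigr|\le \tfrac12 (t-x)^2\|g''\|_{C_B(\mathbb{R}^+)}\le \tfrac12 (t-x)^2\|g\|_{C_B^2(\mathbb{R}^+)}$ holds for all $t,x$; applying the positive linear operator $T_{n,q}^{\ast}$ and invoking part~\eqref{snlm23} of Lemma~\ref{snlm2} gives the factor $\tfrac12\lambda_n(x)$, where $\lambda_n(x)=T_{n,q}^{\ast}((t-x)^2;x)$ as in Theorem~\ref{snsn1}.

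Combining the two estimates via the triangle inequality yields
\[
|T_{n,q}^{\ast}(g;x)-g(x)|\le\left(\left(\tfrac{2qn}{(n+\beta)[2]_q}-1\right)x+\tfrac{n}{(n+\beta)[2]_q[n]_q}+\tfrac{\alpha}{n+\beta}+\tfrac{\lambda_n(x)}{2}\right)\|g\|_{C_B^2(\mathbb{R}^+)},
\]
which is exactly the claimed inequality. The only genuinely delicate point is the treatment of the first-moment term: strictly one should carry it as $|T_{n,q}^{\ast}(t-x;x)|$, and the statement as written tacitly uses that this equals the displayed (sign-definite) expression; I would either note this sign assumption explicitly or absorb it into an absolute value. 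Everything else is routine: the cancellation of constants, the two one-line remainder estimates, and substitution of the already-proved moment formulas from Lemma~\ref{snlm2}. No new computation beyond what Lemma~\ref{snlm2} provides is needed, so there is no real obstacle—just bookkeeping of the long polynomial expression for $\lambda_n(x)$.
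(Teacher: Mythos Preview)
Your argument is correct and follows essentially the same route as the paper: Taylor-expand $g$ about $x$, apply the linear positive operator, and bound the resulting first- and second-moment terms via Lemma~\ref{snlm2} together with $\|g'\|_{C_B},\|g''\|_{C_B}\le\|g\|_{C_B^2}$. The only cosmetic difference is that the paper uses the Lagrange (mean-value) form of the remainder rather than your integral remainder; your version is in fact cleaner, since it avoids the slight sloppiness of treating $g''(\psi)$ as a constant when $\psi$ depends on the integration variable $t$.
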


\begin{proof}
Let $g\in C_{B}^{2}(\mathbb{R}^{+})$, then by using the generalized mean
value theorem in the Taylor series expansion we have
\begin{equation*}
g(t)=g(x)+g^{\prime }(x)(t-x)+g^{\prime \prime }(\psi )\frac{(t-x)^{2}}{2}%
,~~~\psi \in (x,t).
\end{equation*}%
By applying linearity property on $T_{n,q}^{\ast },$ we have
\begin{equation*}
T_{n,q}^{\ast }(g,x)-g(x)=g^{\prime }(x)T_{n,q}^{\ast }\left( (t-x);x\right)
+\frac{g^{\prime \prime }(\psi )}{2}T_{n,q}^{\ast }\left( (t-x)^{2};x\right),
\end{equation*}%
which implies \newline
$\mid T_{n,q}^{\ast }(g;x)-g(x)\mid $
\begin{eqnarray*}
&\leq &\left(\frac{2qn}{(n+\beta)[2]_q}-1\right)x+\frac{n}{(n+\beta)[2]_q[n]_q}+\frac{\alpha}{n+\beta} \parallel g^{\prime }\parallel _{C_{B}(\mathbb{R}^{+})} \\
&+&\frac{n}{(n+\beta)^2[n]_q}(\frac{n}{[3]_q[n]_q}+\frac{2\alpha}{[2]_q})+\frac{\alpha^2}{(n+\beta)^2}+\{\frac{n^2}{(n+\beta)^2}\frac{3}{[3]_q[n]_q}
    \times(1+[1+2\mu]_q)\\
    &+&\frac{2n}{(n+\beta)[2]_q}(2\alpha-\frac{1}{[n]_q})-\frac{2\alpha}{n+\beta}\}x+\{\frac{n}{n+\beta}(\frac{3n}{(n+\beta)[3]_q}
    -\frac{4n}{n+\beta)[2]_q})+1\}x^2
    \times\frac{\parallel g^{\prime \prime
}\parallel _{C_{B}(\mathbb{R}^{+})}}{2}.
\end{eqnarray*}%
On using \eqref{snt1}~~~$\parallel g^{\prime }\parallel
_{C_{B}[0,\infty )}\leq \parallel g\parallel _{C_{B}^{2}[0,\infty )}$
completes the proof from \ref{snlm23} of Lemma \ref{snlm2}.
\end{proof}

The Peetre's $K$-functional is defined by
\begin{equation}
K_{2}(f,\delta )=\inf_{C_{B}^{2}(\mathbb{R}^{+})}\left\{ \left( \parallel
f-g\parallel _{C_{B}(\mathbb{R}^{+})}+\delta \parallel g^{\prime \prime
}\parallel _{C_{B}^{2}(\mathbb{R}^{+})}\right) :g\in \mathcal{W}^{2}\right\}
,  \label{snzr1}
\end{equation}%
where
\begin{equation}
\mathcal{W}^{2}=\left\{ g\in C_{B}(\mathbb{R}^{+}):g^{\prime },g^{\prime
\prime }\in C_{B}(\mathbb{R}^{+})\right\} .  \label{snzr2}
\end{equation}%
There exits a positive constant $C>0$ such that $K_{2}(f,\delta )\leq
C\omega _{2}(f,\delta ^{\frac{1}{2}}),~~\delta >0$, where the second order
modulus of continuity is given by
\begin{equation}
\omega _{2}(f,\delta ^{\frac{1}{2}})=\sup_{0<h<\delta ^{\frac{1}{2}%
}}\sup_{x\in \mathbb{R}^{+}}\mid f(x+2h)-2f(x+h)+f(x)\mid .  \label{snzr3}
\end{equation}

\begin{theorem}
\label{snsn3} Let $T_{n,q}^{\ast }(.~;~.)$ be the operator defined in %
\eqref{snss1} and $C_{B}[0,\infty )$ be the space of all bounded and
continuous functions on $\mathbb{R}^{+}$. Then for $x\in \mathbb{R}^{+},$ $%
f\in C_{B}(\mathbb{R}^{+})$ we have\newline
$\mid T_{n,q}^{\ast }(f;x)-f(x)\mid $\newline
$\leq 2M\left\{ \omega _{2}\left( f;\sqrt{\frac{(\frac{4qn}{(n+\beta)[2]_q}-2)x
+\left( \frac{2n}{(n+\beta)[2]_q[n]_q}+\frac{2\alpha}{n+\beta}\right)+\lambda _{n}(x)}{4}}\right)\newline
+\min\left( 1,\frac{(\frac{4qn}{(n+\beta)[2]_{q}}-2)x+\left( \frac{2n}{(n+\beta)[2]_q[n]_q}+\frac{2\alpha}{n+\beta}\right)
+\lambda _{n}(x)}{4}\right) \\
\parallel f\parallel _{C_{B}(\mathbb{R}%
^{+})}\right\} $,\newline
where $M$ is a positive constant, $\lambda _{n}(x)$ is given in Theorem \ref%
{snsn1} and $\omega _{2}(f;\delta )$ is the second order modulus of
continuity of the function $f$ defined in \eqref{snzr3}.
\end{theorem}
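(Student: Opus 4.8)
The plan is to compare $T_{n,q}^{\ast}$ acting on $f$ with its action on a smooth approximant $g\in C_{B}^{2}(\mathbb{R}^{+})$ and then to optimise over $g$ through Peetre's $K$-functional. First I would fix $g\in\mathcal{W}^{2}$ and use the triangle inequality
\[
|T_{n,q}^{\ast}(f;x)-f(x)|\le |T_{n,q}^{\ast}(f-g;x)|+|T_{n,q}^{\ast}(g;x)-g(x)|+|(f-g)(x)|.
\]
By Lemma~\ref{snlm1}\eqref{snlm11} the operator $T_{n,q}^{\ast}$ is positive and reproduces constants, so $|T_{n,q}^{\ast}(h;x)|\le\|h\|_{C_{B}(\mathbb{R}^{+})}$ for every $h\in C_{B}(\mathbb{R}^{+})$; hence the first and third terms together are at most $2\|f-g\|_{C_{B}(\mathbb{R}^{+})}$.

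For the middle term I would invoke Theorem~\ref{snsn2}, which gives
\[
|T_{n,q}^{\ast}(g;x)-g(x)|\le\Big(T_{n,q}^{\ast}(t-x;x)+\tfrac{1}{2}\lambda_{n}(x)\Big)\,\|g\|_{C_{B}^{2}(\mathbb{R}^{+})}.
\]
By Lemma~\ref{snlm2}\eqref{snlm22} the coefficient equals $\tfrac{1}{4}\eta_{n}(x)$, where I abbreviate $\eta_{n}(x):=2\,T_{n,q}^{\ast}(t-x;x)+\lambda_{n}(x)=\big(\tfrac{4qn}{(n+\beta)[2]_{q}}-2\big)x+\big(\tfrac{2n}{(n+\beta)[2]_{q}[n]_{q}}+\tfrac{2\alpha}{n+\beta}\big)+\lambda_{n}(x)$. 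Combining the two estimates, for every $g\in\mathcal{W}^{2}$,
\[
|T_{n,q}^{\ast}(f;x)-f(x)|\le 2\Big(\|f-g\|_{C_{B}(\mathbb{R}^{+})}+\tfrac{1}{4}\eta_{n}(x)\,\|g\|_{C_{B}^{2}(\mathbb{R}^{+})}\Big).
\]
Taking the infimum over $g\in\mathcal{W}^{2}$ and recalling the definition \eqref{snzr1} of the Peetre $K$-functional, this reads $|T_{n,q}^{\ast}(f;x)-f(x)|\le 2\,K_{2}\big(f,\tfrac{1}{4}\eta_{n}(x)\big)$.

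To finish I would apply the standard equivalence of the $K$-functional with the second order modulus of continuity, in the sharp form $K_{2}(f,\delta)\le M\big(\omega_{2}(f,\sqrt{\delta})+\min(1,\delta)\,\|f\|_{C_{B}(\mathbb{R}^{+})}\big)$ for an absolute constant $M>0$ (the inequality quoted before \eqref{snzr3} is the special case with the last term omitted), applied with $\delta=\tfrac{1}{4}\eta_{n}(x)$. Writing $\eta_{n}(x)$ out then yields exactly
\[
|T_{n,q}^{\ast}(f;x)-f(x)|\le 2M\Big(\omega_{2}\big(f;\sqrt{\tfrac{1}{4}\eta_{n}(x)}\,\big)+\min\big(1,\tfrac{1}{4}\eta_{n}(x)\big)\|f\|_{C_{B}(\mathbb{R}^{+})}\Big),
\]
which is the asserted inequality.

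I do not anticipate a genuine obstacle: this is the routine $K$-functional argument for a positive linear operator that preserves constants but not linear functions. The points requiring a little care are (i) identifying the constant $T_{n,q}^{\ast}(t-x;x)+\tfrac{1}{2}\lambda_{n}(x)$ from Theorem~\ref{snsn2} with $\tfrac{1}{4}\eta_{n}(x)$ via Lemma~\ref{snlm2}\eqref{snlm22} — and, strictly, replacing $T_{n,q}^{\ast}(t-x;x)$ by $|T_{n,q}^{\ast}(t-x;x)|$, since its coefficient of $x$, namely $\tfrac{2qn}{(n+\beta)[2]_{q}}-1$, is negative for $0<q<1$ — and (ii) using the comparison estimate in its sharp form (valid also when $\tfrac{1}{4}\eta_{n}(x)>1$), which is precisely what produces the term $\min\big(1,\tfrac{1}{4}\eta_{n}(x)\big)\|f\|_{C_{B}(\mathbb{R}^{+})}$.
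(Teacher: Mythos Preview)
Your proposal is correct and follows essentially the same route as the paper: triangle inequality against a smooth $g$, bound the first and last terms by $2\|f-g\|_{C_B}$, apply Theorem~\ref{snsn2} to the middle term, take the infimum to obtain $2K_2\big(f,\tfrac14\eta_n(x)\big)$, and then invoke the relation $K_2(f,\delta)\le D\{\omega_2(f,\sqrt\delta)+\min(1,\delta)\|f\|\}$ from \cite{scc1}. Your remark about the sign of $T_{n,q}^{\ast}(t-x;x)$ is a fair caveat that the paper itself glosses over, but the argument is otherwise identical.
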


\begin{proof}
We prove this by using the Theorem \eqref{snsn2}
\begin{eqnarray*}
\mid T_{n,q}^*(f;x)-f(x)\mid &\leq & \mid T_{n,q}^*(f-g;x)\mid+\mid
T_{n,q}^*(g;x)-g(x)\mid+\mid f(x)-g(x)\mid \\
&\leq & 2 \parallel f-g \parallel_{C_B(\mathbb{R}^+)}+ \frac{\lambda_n(x)}{2}%
\parallel g \parallel_{C_B^2(\mathbb{R}^+)} \\
&+ &\left(\frac{2qn}{(n+\beta)[2]_q}-1\right)x+\frac{n}{(n+\beta)[2]_q[n]_q}+\frac{\alpha}{n+\beta}%
\parallel g \parallel_{C_B(\mathbb{R}^+)}
\end{eqnarray*}
From \eqref{snt1} clearly we have ~~~$\parallel g
\parallel_{C_B[0,\infty)}\leq \parallel g \parallel_{C_B^2[0,\infty)}$.%
\newline
Therefore,
\begin{equation*}
\mid T_{n,q}^*(f;x)-f(x)\mid \leq 2 \left(\parallel f-g \parallel_{C_B(%
\mathbb{R}^+)}+\frac{(\frac{4qn}{(n+\beta)[2]_q}-2)x+\frac{2n}{(n+\beta)[2]_q[n]_q}+\frac{2\alpha}{n+\beta}+
\lambda_n(x)}{4}\parallel g \parallel_{C_B^2(\mathbb{R}^+)}\right),
\end{equation*}
where $\lambda_{n}(x)$ is given in Theorem \ref{snsn1}.\newline

By taking infimum over all $g\in C_{B}^{2}(\mathbb{R}^{+})$ and by using %
\eqref{snzr1}, we get
\begin{equation*}
\mid T_{n,q}^{\ast }(f;x)-f(x)\mid \leq 2K_{2}\left( f;\frac{(\frac{4qn}{(n+\beta)[2]_q}-2)x+\frac{2n}{(n+\beta)[2]_q[n]_q}+\frac{2\alpha}{n+\beta}+\lambda _{n}(x)}{4}%
\right)
\end{equation*}%
Now for an absolute constant $D>0$ in \cite{scc1} we use the relation
\begin{equation*}
K_{2}(f;\delta )\leq D\{\omega _{2}(f;\sqrt{\delta })+\min (1,\delta
)\parallel f\parallel \}.
\end{equation*}%
This complete the proof.
\end{proof}

%%%%%%%%%%%%%%%%%%%%%%%%%%%%%%%%%%%
Atakut and Ispir \cite{satk} introduced the weighted modulus of continuity
and defined as, for an arbitrary $f\in Q_{\rho }^{k}(\mathbb{R}^{+})$
\begin{equation}
\Omega (f,\delta )=\sup_{x\in \lbrack 0,\infty ),\mid h\mid \leq \delta }%
\frac{\mid f(x+h)-f(x)\mid }{(1+h^{2})(1+x^{2})}.  \label{snd}
\end{equation}%
The two main properties of this modulus of continuity are $\lim_{\delta
\rightarrow 0}\Omega (f,\delta )\rightarrow 0$ and
\begin{equation}
\mid f(t)-f(x)\mid \leq 2\left( 1+\frac{\mid t-x\mid }{\delta }\right)
(1+\delta ^{2})(1+x^{2})(1+(t-x)^{2})\Omega (f,\delta ),  \label{smd}
\end{equation}%
where $f\in Q_{\rho }^{k}(\mathbb{R}^{+})$ and $t,x\in \lbrack 0,\infty )$.

\begin{theorem}
Let $T_{n,q}^*(.~;~.)$ be the operators defined by \eqref{snss1}. Then for $%
f \in Q^k_\rho(\mathbb{R}^+),~~0<q< 1$ and $x\geq 0 $ we have \newline
\begin{equation*}
\sup_{x \in [0,\infty)}\frac{\mid T_{n,q}^*(f;x)-f(x)\mid}{(1+x^2)} \leq
C_\mu\left( 1+\frac{1}{[n]_q}\right)\Omega(f,\frac{1}{\sqrt{[n]_q}}),
\end{equation*}
where $C_\mu$ is constant independent of $n$.
\end{theorem}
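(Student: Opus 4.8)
The plan is to follow the classical route for weighted‑modulus estimates: convert the pointwise bound \eqref{smd} into an operator inequality by applying the positive linear operator $T_{n,q}^{\ast}(\cdot\,;x)$, and then reduce everything to the central moments already computed in Lemma \ref{snlm2}. Concretely, fix $\delta=\delta_n=1/\sqrt{[n]_q}$. For $f\in Q_\rho^k(\mathbb{R}^+)$ and any $t\ge 0$, \eqref{smd} gives
\begin{equation*}
|f(t)-f(x)|\le 2(1+\delta^2)(1+x^2)(1+(t-x)^2)\Bigl(1+\tfrac{|t-x|}{\delta}\Bigr)\Omega(f,\delta).
\end{equation*}
Since $T_{n,q}^{\ast}$ is linear, positive, and $T_{n,q}^{\ast}(1;x)=1$ by Lemma \ref{snlm1}\eqref{snlm11}, applying it to $|f(t)-f(x)|$ and pulling out the factors that do not depend on $t$ yields
\begin{equation*}
|T_{n,q}^{\ast}(f;x)-f(x)|\le 2(1+\delta^2)(1+x^2)\,\Omega(f,\delta)\,T_{n,q}^{\ast}\!\Bigl((1+(t-x)^2)\bigl(1+\tfrac{|t-x|}{\delta}\bigr);x\Bigr).
\end{equation*}

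Next I would expand the weight as $1+\tfrac1\delta|t-x|+(t-x)^2+\tfrac1\delta|t-x|^3$, so that by linearity the problem reduces to bounding $T_{n,q}^{\ast}(|t-x|;x)$, $T_{n,q}^{\ast}((t-x)^2;x)$ and $T_{n,q}^{\ast}(|t-x|^3;x)$. The two odd moments I would control by the Cauchy--Schwarz inequality together with $T_{n,q}^{\ast}(1;x)=1$, namely $T_{n,q}^{\ast}(|t-x|;x)\le (T_{n,q}^{\ast}((t-x)^2;x))^{1/2}$ and $T_{n,q}^{\ast}(|t-x|^3;x)\le (T_{n,q}^{\ast}((t-x)^2;x))^{1/2}(T_{n,q}^{\ast}((t-x)^4;x))^{1/2}$, so that only the second and fourth central moments survive. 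For these I would invoke Lemma \ref{snlm2}\eqref{snlm23}--\eqref{snlm24}, using that the exponential ratios $e_{\mu,q}(q^{\,j}[n]_qx)/e_{\mu,q}([n]_qx)$ are $\le 1$ and that the factors $[1\pm 2\mu]_q$, $[2]_q,[3]_q,[4]_q,[5]_q$ and $n/(n+\beta)$ stay bounded, to obtain bounds of the form $T_{n,q}^{\ast}((t-x)^2;x)\le \tfrac{C_\mu}{[n]_q}(1+x^2)$ and $T_{n,q}^{\ast}((t-x)^4;x)\le \tfrac{C_\mu}{[n]_q^{2}}(1+x^2)^2$, with the constant depending on $\mu$ only.

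Finally I would put $\delta^2=1/[n]_q$ and collect: the constant term produces the factor $1+\delta^2=1+1/[n]_q$, and each of $\tfrac1\delta(T_{n,q}^{\ast}((t-x)^2;x))^{1/2}$, $T_{n,q}^{\ast}((t-x)^2;x)$ and $\tfrac1\delta(T_{n,q}^{\ast}((t-x)^2;x))^{1/2}(T_{n,q}^{\ast}((t-x)^4;x))^{1/2}$ is dominated by a $\mu$‑dependent multiple of $(1+x^2)$ times a nonnegative power of $1/[n]_q$; dividing by $1+x^2$ and taking the supremum over $x\ge 0$ then gives the claimed inequality with a single constant $C_\mu$ and the factor $1+1/[n]_q$. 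I expect the hard part to be precisely the last reduction of step two: squeezing from the long explicit expressions in Lemma \ref{snlm2} the clean estimates $O((1+x^2)/[n]_q)$ and $O((1+x^2)^2/[n]_q^2)$ for the central moments with a constant that genuinely depends on $\mu$ alone, since every $n$‑ and $q$‑dependent coefficient there must be dominated uniformly.
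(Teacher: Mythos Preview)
Your proposal is correct and follows essentially the same route as the paper: apply the pointwise bound \eqref{smd}, use positivity and $T_{n,q}^{\ast}(1;x)=1$ to pull the constant factors out, expand $(1+(t-x)^2)(1+\tfrac{1}{\delta}|t-x|)$ into four terms, control the two odd absolute moments by Cauchy--Schwarz against the second and fourth central moments from Lemma \ref{snlm2}, and finally set $\delta=1/\sqrt{[n]_q}$. The paper's proof stops at the line containing $1+T_{n,q}^{\ast}((t-x)^2;x)+\tfrac{1}{\delta}\sqrt{T_{n,q}^{\ast}((t-x)^2;x)}+\tfrac{1}{\delta}\sqrt{T_{n,q}^{\ast}((t-x)^2;x)\,T_{n,q}^{\ast}((t-x)^4;x)}$ and simply asserts the result after choosing $\delta$, so your explicit reduction of the central moments to $O((1+x^2)/[n]_q)$ and $O((1+x^2)^2/[n]_q^2)$ is in fact more detailed than what the paper records.
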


\begin{proof}
We prove it by using \eqref{snd}, \eqref{smd} and Cauchy-Schwarz inequality.%
\newline
$\mid T_{n,q}^{\ast }(f;x)-f(x)\mid $
\begin{eqnarray*}
&\leq &\frac{[n]_{q}}{e_{\mu ,q}([n]_{q}x)}\sum_{k=0}^{\infty }\frac{%
([n]_{q}x)^{k}}{\gamma _{\mu ,q}(k)}\int_{\frac{q[k+2\mu \theta _{k}]_{q}}{%
[n]_{q}}}^{\frac{[k+1+2\mu \theta _{k}]_{q}}{[n]_{q}}}\mid f(t)-f(x)\mid
d_{q}(t) \\
&\leq &2(1+\delta ^{2})(1+x^{2})\Omega (f;\delta )\frac{[n]_{q}}{e_{\mu
,q}([n]_{q}x)}\sum_{k=0}^{\infty }\frac{([n]_{q}x)^{k}}{\gamma _{\mu ,q}(k)}%
\int_{\frac{q[k+2\mu \theta _{k}]_{q}}{[n]_{q}}}^{\frac{[k+1+2\mu \theta
_{k}]_{q}}{[n]_{q}}}\left( 1+\frac{1}{\delta }\mid t-x\mid \right) \left(
1+(t-x)^{2}\right) d_{q}(t) \\
&=&2(1+\delta ^{2})(1+x^{2})\Omega (f;\delta )\frac{[n]_{q}}{e_{\mu
,q}([n]_{q}x)} \\
&\times &\bigg{\{}\sum_{k=0}^{\infty }\frac{([n]_{q}x)^{k}}{\gamma _{\mu
,q}(k)}+\sum_{k=0}^{\infty }\frac{([n]_{q}x)^{k}}{\gamma _{\mu ,q}(k)}\int_{%
\frac{q[k+2\mu \theta _{k}]_{q}}{[n]_{q}}}^{\frac{[k+1+2\mu \theta _{k}]_{q}}{%
[n]_{q}}}(t-x)^{2}d_{q}(t) \\
&+&\frac{1}{\delta }\sum_{k=0}^{\infty }\frac{([n]_{q}x)^{k}}{\gamma _{\mu
,q}(k)}\int_{\frac{q[k+2\mu \theta _{k}]_{q}}{[n]_{q}}}^{\frac{[k+1+2\mu
\theta _{k}]_{q}}{[n]_{q}}}\mid t-x\mid d_{q}(t) \\
&+&\frac{1}{\delta }\sum_{k=0}^{\infty }\frac{([n]_{q}x)^{k}}{\gamma _{\mu
,q}(k)}\int_{\frac{q[k+2\mu \theta _{k}]_{q}}{[n]_{q}}}^{\frac{[k+1+2\mu
\theta _{k}]_{q}}{[n]_{q}}}\mid t-x\mid (t-x)^{2}d_{q}(t)\bigg{\}} \\
&\leq &2(1+\delta ^{2})(1+x^{2})\Omega (f;\delta ) \\
&\times &\left( 1+T_{n,q}^{\ast }((t-x)^{2};x)+\frac{1}{\delta }\sqrt{%
T_{n,q}^{\ast }((t-x)^{2};x)}+\frac{1}{\delta }\sqrt{T_{n,q}^{\ast
}((t-x)^{2};x)T_{n,q}^{\ast }((t-x)^{4};x)}\right) \\
&&
\end{eqnarray*}%
where $T_{n,q}^{\ast }((t-x)^{2};x)$ and $T_{n,q}^{\ast }((t-x)^{4};x)$ is
defined in (\ref{snlm23}) and (\ref{snlm24}) of Lemma \ref{snlm2}.\newline
If we choose $\delta =\delta _{n}=\sqrt{\frac{1}{[n]_{q}}}$, then we get our
result.
\end{proof}

%\textbf{Acknowledgement.} The second author (TK) and the third author acknowledge the financial support of the University Grants Commission for awarding the MANF-JRF and the BSR fellowship respectively

\end{document}